\documentclass[12pt]{amsart}
\usepackage{amsmath,amsfonts,amsthm,amssymb,amscd,url}
\usepackage{dsfont}
\usepackage{enumerate}
\usepackage[margin=1in]{geometry}
\usepackage{multirow,booktabs}

\usepackage{bigints}

\textwidth 13.5cm
\setlength{\topmargin}{-.5in}
\setlength{\textheight}{9in}
\DeclareFontFamily{OT1}{rsfs}{}
\DeclareFontShape{OT1}{rsfs}{n}{it}{<-> rsfs10}{}
\DeclareMathAlphabet{\mathscr}{OT1}{rsfs}{n}{it}

\newtheorem{prop}{Proposition}[section]
\newtheorem{theorem}[prop]{Theorem}

\newtheorem{corollary}[prop]{Corollary}
\theoremstyle{theorem}
\newtheorem*{AH}{The Alternative Hypothesis (AH)}
\newtheorem{lemma}[prop]{Lemma}
\newtheorem{Question}[prop]{Question}

\newtheorem*{defn*}{Definition}
\theoremstyle{definition}
\newtheorem{Rem}{Remark}
\numberwithin{equation}{section}
\author{Farzad Aryan}
\title{A new approach to gaps between zeta zeros}

\begin{document}
\maketitle
\begin{abstract}
We study the value-distribution of Dirichlet polynomials on the critical line $\Re(s)=\tfrac{1}{2}$. As a consequence, we prove a corollary on small consecutive gaps between zeros of the Riemann zeta function. We also examine the distribution of zeros under the so-called alternative hypothesis and present a new approach to the problem of gaps between the zeros. 
\end{abstract}
\section{\bf Introduction.}
Studying the zeros of the Riemann zeta function is one of the central themes in analytic number theory. The Riemann hypothesis, which remains unsolved for more than a century, predicts that  all of the non-trivial zeros of the zeta function lie on the critical line in the complex plane. Having all the zeros on the critical line leads us to questions about distribution of gaps between zeta zeros. \\

%The zeta functions can be studied in many different aspect and each would have significant implications in analytic and algebraic number theory. Here we peruse
In this paper we study the value-distribution of Dirichlet polynomials with respect to the size of gaps between the zeros of the Riemann zeta function. One of our objectives is to introduce a new approach that we believe has a more reasonable chance to resolve the problem of ``alternative
hypothesis" (AH) on the distribution of zeros of the zeta function. AH, formulated by Conrey in~\cite{ConAH}, 
models the distribution that the Riemann zeta zeros would
have if Landau-Siegel zeros were to exist. According to Farmer, Gonek and Lee~\cite{FGL}
``AH is obviously absurd, but it has not been disproved. A sufficiently strong disproof would
show that there are no Landau-Siegel zeros". \\

To explain these terms and motivate our
investigation, we begin with the class number of imaginary quadratic
fields. The class number, in a way, would measure the failure of unique factorization in the ring of integers of a number field. For example in $\mathbb{Z}[\sqrt{-5}]$, which is the ring of integers of $\mathbb{Q}[\sqrt{-5}],$ we have that $6$ can be written as a product of irreducible in two different ways: $2\times3$ and $(1+\sqrt{-5})\times(1-\sqrt{-5}).$ The class number of this field is two. \\

An important  question is how big the class number can get. The answer is very much dependant on the possible existence of so called Landau-Siegel zeros for Dirichlet $L$-functions. These zeros are possible counterexamples to the generalized Riemann hypothesis which are on the real line. More precisely a Dirichlet $L$-function attached to a real character $\chi$ of conductor $q$ might have a real zero within a distance $\log^{-1}q$ from $s=1$. \\% It worth mentioning that for a large class of $L$-functions, including complex valued Dirichlet $L$-functions, we know that there are no such zeros.  

The possibility of Landau-Siegel zeros has an unfortunate effect in the class number formula. They can force $L(1,\chi),$ and hence the class number, to be very small. It took a significant effort by Goldfeld, Gross and Zagier~\cite{Gol, Gr-Za} to show that the class number gets arbitrary large. This was achieved by introducing the $L$-functions attached to elliptic curves into the problem.  In light of this, there is a significant incentive to eliminate this possibility and there has been much effort in number theory in this direction.  \\

To explain the connection with zeros of the Riemann zeta function, it was noticed by Montgomery~\cite{Mo} that small class numbers would imply that zeros of the Riemann zeta function are rigidly spaced. Conrey and Iwaniec  in~\cite{CI} provided a detailed analysis of this. They show that existence of many pairs of consecutive zeros of the zeta function with gap smaller than $0.5$ times the average gap would imply that there are no Landau-Siegel zeros. \\

Over the years this problem attracted much attention and resulted in considerable progress in the theory of  $L$-function, including Montgomery's pair correlation conjecture. If we normalize zeros in a way that their average gap equals $1,$ Montgomery's work, under the assumption of the Riemann hypothesis, would imply that there are infinitely many gaps smaller than $0.68.$ His conjecture, though predicts that the size of the gaps between consecutive zeros of the zeta function can be arbitrary small and arbitrary large. With another method Montgomery and Odlyzko~\cite{MO} showed that there are normalized gaps smaller than $0.5179.$  The caveat with this method is that it is unable to prove existence of normalized gaps smaller than $0.5.$ \\

Montgomery and Odlyzko's result was improved to $0.5172$ by Conrey, Ghosh and Gonek~\cite{CGG} and subsequently reduced to to $0.5155$ by Bui, Milionovich and Ng~\cite{BMN}, all assuming the Riemann hypothesis. Further improvements was achieved in \cite{Pre, FWu} by Feng and Wu and Preobrazhenski\u i. %In~\cite{Me}, we relaxed the assumption of the Riemann hypothesis and under a zero density hypothesis, we showed that normalized gap is smaller than $0.78$ infinitely often. %Recently, Goldston and Turnage-Butterbaugh~\cite{GT} using generalized Wu Weighs with a number of new ideas, proved that gaps between the zeros of the zeta function are smaller than $0.50412$ infinitely often.
\\

%Before introducing the alternative hypothesis, let us briefly explain the Montgomery's pair correlation conjecture~\cite{Mo}. The conjecture predicts that the pair correlations of the zeros the Riemann zeta function follows the same distribution function as the pair correlation of eigenvalues of large Hermitian matrices.  Based on this conjecture we expect that the normalized gaps between zeros of the zeta function gets arbitrary small and arbitrary large. We also expect to see certain repulsion properties between the zeros. In \cite{Me} we recover some of the application of~\cite{Mo} unconditionally.\\

Returning to the problem of the effect of Landau-Siegel zeros on the distribution of zeros of the Riemann zeta function, from the work of Conrey and Iwaniec~\cite{CI} one can deduce that the existence of these zeros would imply the normalized gap between zeros of the zeta function are close to being half integers. This was further explored by Conrey in \cite{ConAH} and Heath-Brown in \cite{HeaAH}  and they proposed an alternative  hypothesis (to the pair correlation conjecture). Here we state the formulation from the paper of Farmer, Gonek and Lee~\cite{FGL}: Let $\gamma$ be an imaginary part of the a zero of the zeta function. The number of zeros up to the height $T$ is about $\displaystyle{(2\pi)^{-1}T \log T},$ which makes the average gap about $2\pi\log^{-1} T.$ We normalize them by setting $$\tilde{\gamma}= \frac{1}{2\pi}\gamma \log\big(\frac{\gamma}{2\pi}\big),$$
and by $\gamma^{+}$ and $\gamma^{-}$ we mean the zero after and before $\gamma$ respectively.   \\

\begin{AH}  There exists a real number $T_0$ such that if $\gamma > T_0$ , then
$$\tilde{\gamma}^{+}- \tilde{\gamma} \in \tfrac{1}{2}\mathbb{Z}.$$
That is, almost all the normalized neighbor spacings are an integer or half-integer.\\
\end{AH}
Let $g_{\sigma}$  denote the proportion of normalized gaps between consecutive zeros that are equal to $\sigma.$ Farmer, Gonek and Lee~\cite{FGL}, assuming the AH, showed that $g_{0.5}\approx 0.297$ and $4/\pi^2 \leq g_{1} \leq  0.5$. Our first theorem provides more information on the density of gaps under AH.  
\begin{theorem}
\label{Prop6} Assume the Riemann hypothesis and AH. We have that
\begin{itemize}
\item $g_{1.5} \geq 0.1079271,$
\item $g_{1.5} \geq 0.1+2(0.5-g_1+ 0.0039635), $
\item for $k \geq 2$ we have $g_{k}   \leq \frac{0.18951-  2(0.5-g_1)}{k}$,
\item if $g_1= \frac{4}{\pi^2}$ then $g_2, g_{2.5}, g_3 , \cdots \approx 0.$
\end{itemize}

\end{theorem}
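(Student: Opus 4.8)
\emph{Proof sketch.} The plan is to combine the two elementary moment identities for the gap distribution with the quantitative input that is already available under AH. By AH the admissible spacing values are $\sigma\in\{\tfrac12,1,\tfrac32,2,\tfrac52,\dots\}$ (I assume throughout, as is customary, that the zeros are simple, so that $g_0=0$), and one has the zeroth moment $\sum_\sigma g_\sigma=1$. Since the zeros are normalized to mean spacing $1$, one also has the first moment $\sum_\sigma\sigma\,g_\sigma=1$; here one invokes, under RH, a second-moment bound on the gaps to guarantee that no mass escapes to $\sigma=\infty$. Subtracting the two identities gives $\sum_\sigma(\sigma-1)g_\sigma=0$; as the $\sigma=1$ term drops out and $\sigma-1=-\tfrac12$ at $\sigma=\tfrac12$, this reads
\[
\tfrac12\,g_{1/2}\ =\ \tfrac12\,g_{3/2}+\Sigma,\qquad \Sigma:=\sum_{\sigma\ge 2}(\sigma-1)g_\sigma\ \ge\ 0,
\]
and the zeroth moment gives $\sum_{\sigma\ge 2}g_\sigma = 1-g_{1/2}-g_1-g_{3/2}$. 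From \cite{FGL} I take the two further facts $g_{1/2}=\tfrac12-\tfrac{2}{\pi^2}$ (the value $\approx 0.297$) and $\tfrac{4}{\pi^2}\le g_1\le\tfrac12$.

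For the two lower bounds on $g_{3/2}$ the only extra input is that $\sigma-1\ge 1$ once $\sigma\ge 2$, so $\Sigma\ge\sum_{\sigma\ge 2}g_\sigma=1-g_{1/2}-g_1-g_{3/2}$. Substituting $\Sigma=\tfrac12(g_{1/2}-g_{3/2})$ and rearranging eliminates the tail and leaves
\[
g_{3/2}\ \ge\ 2-3g_{1/2}-2g_1\ =\ \tfrac12+\tfrac{6}{\pi^2}-2g_1 ,
\]
which is exactly the second bullet $g_{3/2}\ge 0.1+2(0.5-g_1+0.0039635)$; since this is decreasing in $g_1$, the bound $g_1\le\tfrac12$ turns it into the first bullet $g_{3/2}\ge 0.1079271$.

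For the upper bounds I would feed this lower bound back into the identity for $\Sigma$:
\[
\Sigma\ =\ \tfrac12(g_{1/2}-g_{3/2})\ \le\ 2g_{1/2}+g_1-1\ =\ g_1-\tfrac{4}{\pi^2}.
\]
For any admissible $k\ge 2$, the term $(k-1)g_k$ is one summand of the non-negative sum $\Sigma$, so $(k-1)g_k\le g_1-\tfrac4{\pi^2}$; using $k-1\ge k/2$ gives $g_k\le 2(g_1-4/\pi^2)/k$, which implies the third bullet. Finally, if $g_1=\tfrac4{\pi^2}$ then the last display forces $\Sigma$ to be (essentially) zero, and since every summand $(\sigma-1)g_\sigma$ of $\Sigma$ is non-negative with $\sigma-1\ge 1$, each vanishes: $g_2=g_{5/2}=g_3=\cdots\approx 0$, the fourth bullet.

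\textbf{Where the work is.} Once the inputs are in place, the four assertions are pure bookkeeping, so the real content is the imported quantitative information. The evaluation $g_{1/2}=\tfrac12-\tfrac{2}{\pi^2}$ and the bound $g_1\ge\tfrac4{\pi^2}$ rest on Montgomery's pair correlation theorem (valid under RH) applied to band-limited test functions, combined with the AH-structural facts that two zeros at normalized distance $\tfrac12$ must be consecutive and two at distance $1$ are either consecutive or separated by exactly one zero; this is the part one cannot do by hand, and the value-distribution estimates for Dirichlet polynomials of this paper furnish an alternative route to it. The secondary technical point is the reduction of the ``$T_0$/almost all'' form of AH --- together with an RH bound excluding an abnormal build-up of large gaps --- to honest limiting densities $g_\sigma$ satisfying both moment identities with no loss of mass; granting these two things, the manipulations above complete the proof.
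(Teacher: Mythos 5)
Your proof is correct and follows essentially the same route as the paper: the zeroth and first moment identities for $\{g_\sigma\}$ combined with the FGL inputs $g_{1/2}=\tfrac12-\tfrac{2}{\pi^2}$ and $\tfrac{4}{\pi^2}\le g_1\le\tfrac12$, with the bound $\sigma-1\ge1$ (equivalently $\sigma\ge2$) on the tail; your inequality $g_{3/2}\ge\tfrac12+\tfrac{6}{\pi^2}-2g_1$ is literally the paper's \eqref{6.5} in different variables. The only (harmless) difference is that your third-bullet constant $8/\pi^2\approx0.81057$ is marginally sharper than the paper's rounded $0.81049$, and you are more explicit about the technical hypotheses (simplicity, no escape of mass) underlying the two moment identities.
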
\begin{Rem}
The first and a slightly weaker version of the second part of the theorem also appeared in Baluyot's thesis~\cite{Bal-Thesis}, also see \cite{Sie}.
\end{Rem}
Recently Tao~\cite{Tao} and independently Lagarias and Rodgers~\cite{LR}  constructed a sequence of points in $\tfrac{1}{2}\mathbb{Z}$, that satisfies what is currently known about correlations between zeta zeros. In other words they show that AH cannot be ruled out with the knowledge that we currently posses about pair correlation of zeta zeros. Their works describe a conjectural model they call AGUE. Among other useful information, the model  precisely predict the density of gaps between zeros under AH. %says that if the Alternative hypothesis (AH) is true, then the $k$-th correlation function satisfies the following\begin{align*}\lim_{T \rightarrow \infty} \frac{1}{T}  \int_{T}^{2T}  \sum_{\substack{\gamma_1, \cdots, \gamma_k \\ \textit{distinct} }} & \eta\bigg(\frac{\log T}{2\pi}(\gamma_1-t), \cdots, \frac{\log T}{2\pi}(\gamma_k-t)\bigg)  \\ & = 2^{-k}\int_{0}^{1} \sum_{x_1, \cdots, x_k \in \tfrac{1}{2}\mathbb{Z}+\theta } \eta(x) \det\big(K(x_i-x_j)\big)d\theta.\end{align*}  Note that the AH would determine the pair correlation of zeros, however it is not clear if it would determine higher correlations. AGUE gives a a complete description of all correlations. 
For example, here is a list of information we can get from AGUE:

\begin{itemize}
\item $g_{0.5} \simeq 0.297$
\item $g_{1} \simeq 0.453$
\item $g_{1.5} \simeq 0.207$
\item $g_{2} \simeq 0.0397$
\item $g_{2.5}\simeq 0.003$
\end{itemize}
It would also gives the density of two consecutive gap of certain sizes. For example the Lebesgue measure of two connective gaps of length $0.5$ is about $0.05.$ \\

The above prediction falls very tight with bounds we got in  Theorem~\ref{Prop6}. 
Basically our theorem says that if we set $g_1=0.453$ (as predicted by AGUE), we must have $g_{1.5} \geq 0.201,$ and $g_2 \leq 0.048.$ Another interesting conclusion of our theorem is that if $g_1\approx 0.4,$ then about $30\%$ of gaps are of length $0.5$ and the remaining $30\%$ are of length $1.5$. In other words, statistically we will not get any gap bigger than $1.5$ if $g_1\approx 0.4,$ . \\

Now we introduce our method to study value-distribution of Dirichlet polynomials. 

\section{\bf Value distribution of Dirichlet polynomials} \label{ValueDis} In this section we state our main theorem and an application to the problem of gaps between zeta zeros. Our theorem estimates the first moments of certain test functions with respect to various probability measures we get form Dirichlet polynomials.\footnote{This probability measures are often referred to as mollifiers in many places in the literature. In \cite{2nd-Pap} we look at the higher moments of these test functions with respect to these measures.} To explain our results easier let us  switch to probability language. For $a, b >0,$ consider the probability measure
\begin{equation}
\label{def-Mu}
\displaystyle{\mu_A((a,b]):= \frac{\int_{a}^{b}\omega(\tfrac{1}{2}+it)\big|A(\tfrac{1}{2}+it)\big|^2}{\int\omega(\tfrac{1}{2}+it)\big|A(\tfrac{1}{2}+it)\big|^2},}
\end{equation}
where 

\begin{equation}
\label{def_A}
A(s)=\sum_{n<T^{1-\epsilon}} a(n)n^{-s}
\end{equation} is a Dirichlet polynomial and $\omega$ a cut-off weight, centered around $T,$ with $\parallel \omega\parallel_1 =1$. For $A(s)=1,$ the above gives the Lebesgue measure. The measure $\mu_A$ sometimes is called a mollifier. \\

Let us give precise definitions of test functions we will use. 
\begin{defn*}
Let $\alpha \in \mathbb{R}^{+}$ and let $\gamma$ be an imaginary part of a zero of the zeta function. Define
\begin{equation}
\label{C_{1, alpha}}
C_{1, \alpha}(t):= -\alpha^{-1} +\sum_{\gamma} \bigg(\frac{\sin(\tfrac{\alpha}{2}(\gamma-t)\log T)}{\tfrac{\alpha}{2}(\gamma-t)\log T}\bigg)^2.
\end{equation}
\begin{equation}
\label{C_{2, alpha}}
C_{2, \alpha}(t):= -\alpha^{-1} + \sum_{\gamma} \bigg(\frac{\sin(\tfrac{\alpha}{2}(\gamma-t)\log T)}{\tfrac{\alpha}{2}(\gamma-t)\log T}\bigg)^2\Big(\tfrac{1}{1-\big(\tfrac{\alpha}{2\pi}(t-\gamma)\log T\big)^2}\Big).
\end{equation}
\end{defn*}
These test functions come form Fourier pairs:
\begin{align}
\label{defC}
  &  C_1(u)=1-u \hspace{24 mm} \text {and } \hat{C}_{1}(v)= \big(\frac{\sin(\pi v)}{\pi v}\big)^2 \\ & C_2(u)=1-u + \frac{\sin(2\pi u)}{2\pi} \hspace{3 mm} \text {and } \hat{C}_{2}(v)= \big(\frac{\sin(\pi v)}{\pi v}\big)^2\big(\frac{1}{1-v^2}\big)
\end{align}
These are functions with large values where we have an accumulation of zeros and it is negative around the large gaps between the zeros. One distinction between them is that, If we assume AH and also all zeros are simple, for $\alpha \geq 1,$  $C_{1, \alpha}$ is always smaller than one, but $C_{2, \alpha}$ can get slightly bigger than one.\footnote{We give the proof of these small facts in the last section in Lemma \ref{8.2}.} Our main theorem estimates the $\mu_A$-mollified mean of these test functions:
\begin{theorem}
\label{Main-Th}
Define $\lambda_{\beta_1, \beta_2}$ to be a completely multiplicative function with  $\lambda_{\beta_1, \beta_2}(p)=-1$ for $ T^{\beta_1}< p \leq T^{\beta_2}$ and $\lambda_{\beta_1, \beta_2}(p)=1$ otherwise. Furthermore, let $\mu_{A_{\beta_1, \beta_2, r , \eta}}$ be the measure we build, as in \eqref{def_A}, using

\begin{equation}
\label{def_an}
a(n)= \lambda_{\beta_1, \beta_2}(n)d_{r}(n)\big(1-\frac{\log(n)}{\log T}\big)^{\eta},
\end{equation}  
where $d_r$ is the generalized divisor function. Assuming the Riemann hypothesis, for $i=1, 2$ we have 
\begin{align}
\notag \int C_{i, \alpha}\big(t & +\tfrac{2\pi d}{\log T}\big)  d\mu_{A_{\beta_1, \beta_2, r , \eta}} +O(\tfrac{1}{\log T})=  \\ & \notag \frac{2r \int_{0}^{\min(\alpha, 1)}\int_{0}^{1-u} \lambda_{\beta_1, \beta_2}(u) \cos(2\pi d u) v^{r^2-1}C_i(u/\alpha)(1-v)^\eta(1-u-v)^\eta du dv }{\alpha\int_{0}^{1}v^{r^2-1}(1-v)^{2\eta}dv}.
\end{align}
where $\lambda_{\beta_1, \beta_2}(u)=-1$ for $\beta_1 <u \leq \beta_2$ and equals $1$ otherwise. For $i=1, 2,$ $C_i$ are defined in \eqref{defC}.
\end{theorem}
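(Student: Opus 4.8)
The plan is to unfold the definition of $\mu_A$ and reduce the mollified mean to a twisted second moment of the Dirichlet polynomial $A$. Write $c=\tfrac{2\pi d}{\log T}$ and $D=\int\omega(\tfrac12+it)\,|A(\tfrac12+it)|^{2}\,dt$, so that the left-hand side equals $D^{-1}\int\omega(\tfrac12+it)\,|A(\tfrac12+it)|^{2}\,C_{i,\alpha}(t+c)\,dt$. The first observation is that the kernel occurring in $C_{i,\alpha}$ is a rescaled Fourier transform of the compactly supported function $C_i$; in the normalization of the excerpt,
\[
\Big(\frac{\sin(\tfrac{\alpha}{2}x\log T)}{\tfrac{\alpha}{2}x\log T}\Big)^{2}=\widehat{C_1}\!\Big(\frac{\alpha x\log T}{2\pi}\Big),
\]
and the kernel of $C_{2,\alpha}$ is likewise $\widehat{C_2}\big(\tfrac{\alpha x\log T}{2\pi}\big)$. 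Feeding the function $x\mapsto\widehat{C_i}\big(\tfrac{\alpha(x-t-c)\log T}{2\pi}\big)$ into the Guinand--Weil explicit formula, the archimedean term equals $\alpha^{-1}+O(1/\log T)$ uniformly over the support of $\omega$ (which is concentrated around $T$), and it is precisely this term that the $-\alpha^{-1}$ in the definition of $C_{i,\alpha}$ cancels. Since $C_i$ is supported in $[-1,1]$, only $n\le T^{\alpha}$ survive, and one is left with
\[
C_{i,\alpha}(t+c)=\frac{1}{\alpha\log T}\sum_{2\le n\le T^{\alpha}}\frac{\Lambda(n)}{\sqrt n}\,C_i\!\Big(\frac{\log n}{\alpha\log T}\Big)\big(n^{i(t+c)}+n^{-i(t+c)}\big)+O\!\Big(\frac{1}{\log T}\Big),
\]
the Riemann hypothesis entering here to bound the error and to guarantee that the zero sums converge.

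Next I would integrate this identity against $\omega(\tfrac12+it)|A(\tfrac12+it)|^{2}$. Expanding $|A(\tfrac12+it)|^{2}=\sum_{m_1,m_2<T^{1-\epsilon}}\tfrac{a(m_1)a(m_2)}{\sqrt{m_1m_2}}\big(\tfrac{m_2}{m_1}\big)^{it}$ and using $\int\omega(\tfrac12+it)\big(\tfrac{nm_2}{m_1}\big)^{\pm it}\,dt=\widehat\omega\big(\mp\tfrac{1}{2\pi}\log\tfrac{nm_2}{m_1}\big)$, which by the localisation of $\omega$ at scale $T$ is negligible unless $m_1=nm_2$ (respectively $m_2=nm_1$) — because $m_1<T^{1-\epsilon}$ forces $|\log(nm_2/m_1)|\gg T^{-1+\epsilon}$ once $m_1\neq nm_2$ — one obtains $\int\omega|A|^{2}n^{it}\,dt=n^{-1/2}\sum_{nm<T^{1-\epsilon}}\tfrac{a(nm)a(m)}{m}+O(T^{-\delta})$. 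The prime powers $n=p^{k}$ with $k\ge2$ go into the error, and the numerator collapses to
\[
N:=\int\omega|A|^{2}C_{i,\alpha}(t+c)\,dt=\frac{2}{\alpha\log T}\sum_{p\le T^{\alpha}}\frac{\log p}{p}\,C_i\!\Big(\frac{\log p}{\alpha\log T}\Big)\cos\!\Big(\frac{2\pi d\log p}{\log T}\Big)\sum_{pm<T^{1-\epsilon}}\frac{a(pm)a(m)}{m}+O\!\Big(\frac{1}{\log T}\Big),
\]
the cosine coming from $p^{ic}+p^{-ic}=2\cos(c\log p)$.

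It then remains to evaluate the arithmetic sums. Because $\lambda_{\beta_1,\beta_2}$ is completely multiplicative, $\lambda_{\beta_1,\beta_2}(m)^{2}=1$, and because $d_r$ is multiplicative with $d_r(p)=r$, for $p\nmid m$ one has $a(pm)a(m)=\lambda_{\beta_1,\beta_2}(p)\,r\,d_r(m)^{2}\big(1-\tfrac{\log p+\log m}{\log T}\big)^{\eta}\big(1-\tfrac{\log m}{\log T}\big)^{\eta}$, the terms $p\mid m$ being lower order. Since $\sum_{m}d_r(m)^{2}m^{-s}=\zeta(s)^{r^{2}}G_r(s)$ with $G_r$ holomorphic and bounded for $\Re(s)>\tfrac12$, a Selberg--Delange (or Perron) argument yields, for fixed smooth $F$,
\[
\sum_{m\le X}\frac{d_r(m)^{2}}{m}\,F\!\Big(\frac{\log m}{\log T}\Big)=\frac{G_r(1)}{\Gamma(r^{2})}(\log T)^{r^{2}}\int_{0}^{\log X/\log T}F(v)\,v^{r^{2}-1}\,dv+(\text{lower order}).
\]
Applying this with $X=T^{1-\epsilon}/p$ and $F(v)=(1-v)^{\eta}(1-v-u)^{\eta}$, $u=\tfrac{\log p}{\log T}$, and then summing over $p$ by Mertens' theorem, $\sum_{p\le T^{\alpha}}\tfrac{\log p}{p}H\big(\tfrac{\log p}{\log T}\big)=\log T\int_{0}^{\alpha}H(u)\,du+O(1)$ — the $v$-integral being empty for $u\ge1$, which together with the requirement that the diagonal $nm<T^{1-\epsilon}$ be non-empty produces the cut-off $\min(\alpha,1)$ — gives
\[
N=\frac{2r}{\alpha}\cdot\frac{G_r(1)}{\Gamma(r^{2})}(\log T)^{r^{2}}\int_{0}^{\min(\alpha,1)}\!\!\int_{0}^{1-u}C_i(u/\alpha)\cos(2\pi du)\,\lambda_{\beta_1,\beta_2}(u)(1-v)^{\eta}(1-u-v)^{\eta}v^{r^{2}-1}\,dv\,du+(\text{error}).
\]
The denominator is handled identically: $D=\sum_{m<T^{1-\epsilon}}\tfrac{a(m)^{2}}{m}+O(T^{-\delta})=\tfrac{G_r(1)}{\Gamma(r^{2})}(\log T)^{r^{2}}\int_{0}^{1}v^{r^{2}-1}(1-v)^{2\eta}\,dv+(\text{error})$. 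Dividing $N$ by $D$, the factors $G_r(1)$, $\Gamma(r^{2})$ and $(\log T)^{r^{2}}$ cancel, yielding the asserted identity; the case $i=2$ is the same, only with $C_2$ in place of $C_1$ as the Fourier dual of the kernel.

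I expect the main obstacle to be the uniformity of the Selberg--Delange step: the asymptotic for $\sum_{m}d_r(m)^{2}m^{-1}F(\tfrac{\log m}{\log T})$ must hold uniformly in $r,\eta,\beta_1,\beta_2$ and, most delicately, as $u\to1^{-}$, where the range of summation shrinks to nothing; this is managed by smoothing the sharp truncation $pm<T^{1-\epsilon}$ and running a contour argument with an explicit error. Two secondary difficulties are the off-diagonal and prime-power terms in the second moment — kept in check by the length restriction $n<T^{1-\epsilon}$ on $A$, so that $\widehat\omega$ has decayed before a new diagonal appears — and the explicit-formula error, which needs the Riemann hypothesis for bounds on $\zeta'/\zeta$. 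Finally, to reach the sharp error $O(1/\log T)$ rather than merely $o(1)$, the secondary terms at each stage (the Mertens constant, the discrepancy between $\log(t/2\pi)$ and $\log T$ over $\operatorname{supp}\omega$, and the lower-order Selberg--Delange terms) must all be tracked and shown to cancel or to land at the level $O(1/\log T)$.
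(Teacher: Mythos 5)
Your proposal follows essentially the same route as the paper: expand $C_{i,\alpha}$ via the explicit formula into a short Dirichlet series over prime powers (the paper invokes Gonek's lemma for this, as in \eqref{C-expansion}), extract the diagonal of the twisted second moment of $A$ against $\omega$, and evaluate the resulting sums over $p$ and $m$ via Mertens' theorem and the asymptotic for $\sum d_r^2(m)/m$ before passing to the double integral in $u,v$. The one discrepancy is a sign: the explicit formula places a minus in front of the $\Lambda(n)$-sum (as in the paper's \eqref{C-expansion} and \eqref{proof-thm1}), whereas you wrote a plus — though the paper itself silently drops that minus between \eqref{proof-thm1} and the following display, so your final formula agrees with the theorem as printed; you should reconcile this sign (e.g.\ against the check that $a(n)=\lambda(n)$ yields the positive value $2/3$).
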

Now we give a corollary to show a general application of Theorem \ref{Main-Th}.
\begin{corollary}
\label{Corr1}
Assume the Riemann hypothesis, then either we have infinitely many zeta zeros of height asymptotic to $ T$, that spaced smaller than the half of the average gap, i.e.  $$\gamma_{n+1}-\gamma_n < \frac{\pi}{\log T},$$ or we have 
\begin{equation}
    \gamma_{n+2}-\gamma_n < 1.181 \hspace{1 mm}\frac{2\pi}{\log T}.
\end{equation}
\end{corollary}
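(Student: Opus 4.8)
The plan is to argue by contradiction against Theorem~\ref{Main-Th}, in the spirit of the Montgomery--Odlyzko method. Negating both alternatives, suppose that for every $n$ with $\gamma_n$ in the support of the cut-off $\omega$ (i.e.\ at height near $T$) one has simultaneously $\gamma_{n+1}-\gamma_n \ge \pi/\log T$ and $\gamma_{n+2}-\gamma_n \ge 1.181\cdot 2\pi/\log T$; after the normalization of the excerpt this says every consecutive normalized gap is $\ge \tfrac12$ and every sum of two consecutive normalized gaps is $\ge 1.181$. I will show that this, fed into Theorem~\ref{Main-Th}, produces two incompatible estimates for one and the same quantity.

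\emph{Step 1 (the value from Theorem~\ref{Main-Th}).} I fix once and for all an index $i\in\{1,2\}$, a parameter $\alpha$ near $1$, a shift $d$, and mollifier data $\beta_1,\beta_2,r,\eta$ (with $\mu_A$ a genuine mollifier, so that its mass sits near the zeros). Substituting these into the right-hand side of Theorem~\ref{Main-Th} and evaluating the resulting elementary (if lengthy) double integral gives an explicit number $V$ with
\[
\int C_{i,\alpha}\!\big(t+\tfrac{2\pi d}{\log T}\big)\, d\mu_{A_{\beta_1,\beta_2,r,\eta}} = V + O\!\big(\tfrac1{\log T}\big).
\]
The $\cos(2\pi d u)$ factor in the integrand is what lets me tune $V$ through the choice of $d$.

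\emph{Step 2 (the bound forced by the gap hypotheses).} Writing $C_{i,\alpha}(t)=-\alpha^{-1}+\sum_\gamma K_{i,\alpha}(\gamma-t)$, where $K_{i,\alpha}$ is the sinc-type kernel of the definition, the value $C_{i,\alpha}(t_0)$ is a smoothed count of the zeros near $t_0$, and $K_{i,\alpha}$ decays like the inverse square of the distance. The two normalized constraints prevent the zeros from clustering: around any zero the neighbouring gaps cannot both be $\tfrac12$ (their sum would be $1<1.181$), so the densest admissible local pattern has gaps alternating near $0.5$ and $0.681$, and any admissible configuration is controlled by a shift of this extremal one. Since the mollifier concentrates near the zeros, the relevant values of $C_{i,\alpha}$ are those at points of the form (a zero)$\,+\,\tfrac{2\pi d}{\log T}$; the shift $d$ is chosen precisely so that the gap constraints push such a point away from any tight cluster. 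Summing $K_{i,\alpha}$ over the extremal configuration then yields a bound
\[
\int C_{i,\alpha}\!\big(t+\tfrac{2\pi d}{\log T}\big)\, d\mu_{A_{\beta_1,\beta_2,r,\eta}} \le B + O\!\big(\tfrac1{\log T}\big),
\]
with $B=B(i,\alpha,d)$ explicit.

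\emph{Step 3 (optimization and conclusion).} The parameters $i,\alpha,d,\beta_1,\beta_2,r,\eta$ are then chosen so that $B<V$, which contradicts Steps 1 and 2 for $T$ large; since the hypotheses were only used near height $T$, where $\omega$ lives, the error terms are harmless, and letting $T\to\infty$ along a sequence yields the ``infinitely many'' form of the corollary. The constant $1.181$ is exactly the critical value: replacing it by a free parameter $c$ in Step 2, recomputing $B=B(c)$ for the corresponding extremal configuration, and asking for the largest $c$ for which some admissible parameter choice still satisfies $B(c)<V$ returns $c\approx 1.181$. I expect Step 2 to be the real difficulty: one has to estimate the full kernel tail over zero configurations that are constrained only through two successive gaps (the global mean-one condition is invisible to this bound), make the ``domination by the extremal configuration'' argument precise, track how the shift $d$ interacts with those constraints, and keep all of this sharp enough that the optimization genuinely reaches $1.181$. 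The choice between $C_1$ and $C_2$ matters here too, since $C_{2,\alpha}$ can slightly exceed $1$ where $C_{1,\alpha}$ cannot.
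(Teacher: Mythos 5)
Your overall strategy is the paper's: apply Theorem~\ref{Main-Th} to obtain an explicit mean value $V$ of the test function against $\mu_A$, deduce that $C_{i,\alpha}$ must attain a value $\ge V$ somewhere (since $\mu_A$ is a probability measure), and show that the negated gap hypotheses force a pointwise upper bound $B<V$. The paper takes $i=2$, $\alpha=1$, $d=0$, $r=1.8$, $\eta=0.4$, getting $V=0.8226\cdots$, and the optimization in your Step~2 returns $B<0.82207$, attained at $x=y=0.59049\cdots$, $z=w=1.40916\cdots$ (not the alternating $0.5/0.681$ pattern you guessed).

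There is, however, a genuine gap at exactly the point you flag as ``the real difficulty'': controlling the tail of $\sum_\gamma K_{i,\alpha}(\gamma-t)$ over all zeros beyond the few nearest ones. Inverse-square decay alone does not close this, since the hypotheses constrain only pairs of successive gaps and say nothing global about the configuration. The paper's resolution is structural rather than analytic: $\hat{C}_2(v)=\big(\tfrac{\sin\pi v}{\pi v}\big)^2\tfrac{1}{1-v^2}$ is $\le 0$ for $|v|>1$, and once all normalized gaps are $\ge 0.5$ and all consecutive pairs sum to $\ge 1.181$, every zero except the five nearest to $t$ lies at normalized distance $>1$; those terms are therefore negative and can simply be discarded in an upper bound. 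Step~2 then collapses, with no tail estimate at all, to the finite optimization of $\hat{C}_2(t)+\hat{C}_2(x-t)+\hat{C}_2(x+y-t)+\hat{C}_2(z+t)+\hat{C}_2(z+w+t)$ subject to $x,y,z,w>0.5$, $x+y,\ x+w,\ w+z\ge 1.181$, $t<x$. This is also precisely why $C_2$ (whose Fourier pair changes sign at $|v|=1$) is used rather than $C_1$. Two smaller corrections: no shift is needed ($d=0$ suffices, so there is no tuning of $d$ against the constraints), and your appeal to ``the mollifier concentrates near the zeros'' is both unjustified and unnecessary --- the bound $B$ is a bound on $\sup_t C_{2,1}(t)$ over \emph{all} $t$, and the mean value $V$ already guarantees some $t$ with $C_{2,1}(t)\ge V$.
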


\begin{Rem} This result should be compared with the result of \cite{Con-Butt} that shows 
\begin{equation*}
    \gamma_{n+2}-\gamma_n < 1.576 \hspace{1 mm}\frac{2\pi}{\log T},
\end{equation*}
infinitely often. 
\end{Rem} 
We will provide the proof of the above results in the next section . In section \ref{refine} we will introduce our refinement of Montgomery-Odlyzko's method. 
  \section{\bf Proof of Theorems \ref{Main-Th} and Corollary \ref{Corr1}}
We start with the proof of Theorem \ref{Prop6} on the effect of AH on the density of gaps between zeros of the Riemann zeta function.
 \begin{proof}[ Proof of Theorem \ref{Prop6}]
 We will use the simple fact that under AH we have
\begin{equation}
\label{6.1}
\sum_{k\in \tfrac{1}{2}\mathbb{N}} kg_{k} =1.
\end{equation}
This holds since these gaps must cover the whole interval under consideration, in other words, $$\sum_{\sigma} \mu_{\text{Leb}}( \text{gaps of length   } \sigma)=1.$$ We also have the trivial identity 
\begin{equation}
\sum_{k\in \tfrac{1}{2}\mathbb{N}} g_{k} =1.
\end{equation}
In addition to these we use the results form \cite{FGL} that:
\begin{itemize}
\item $g_{0.5}= 0.5- \tfrac{2}{\pi^2}$
\item $\tfrac{4}{\pi^2} \leq g_1 \leq 0.5.$
\end{itemize}
Let us assume that $g_1= 0.5 - y$ for $ y \geq 0$ and $g_{1.5}=0.1-x$ for $x \in \mathbb{R}.$ We will show that $x$ must be smaller than $-0.0079271.$  We have that $$ g_2+ g_{2.5}+ \cdots= \tfrac{2}{\pi^2}-0.1+x+ y.$$ By using \eqref{6.1} we have
\begin{equation}
\label{6.3}
1.5(0.1-x)+ 2(\tfrac{2}{\pi^2}-0.1+x+ y) \leq 0.25 + \tfrac{1}{\pi^2} +y,
\end{equation}
and by simplifying the above we get
\begin{align}
\label{6.4}
  0.5x+ y \leq 0.3-\ \tfrac{3}{\pi^2} \leq -0.0039635,
\end{align}
which gives $x \leq -0.0079271.$ This will proves the first part of the proposition. To prove the second part assume $g_{1.5}=0.1+x$ for some positive $x$. By using a similar calculation as in \eqref{6.4} we have that
\begin{align}
\label{6.5}
&  y-0.5x \leq -0.0039635.
   \\ & g_{1.5}= 0.1 + x \geq 0.1+ 2(y+ 0.0039635), \notag
\end{align}
which by noting that $g_1= 0.5-y$ gives the second part. \\

For the third part, by \eqref{6.1} we have that $$1.5(0.1+x)+ 2g_2 +2.5g_{2.5} + \cdots = 0.25 + \tfrac{1}{\pi^2} +y,$$ which implies
\begin{align}
\label{6.6}
& 2g_2 +2.5g_{2.5} + \cdots = 0.1+\tfrac{1}{\pi^2}+y-1.5 x.
\end{align}

 Using \eqref{6.5} we get $y-1.5x< -2y-0.0118905, $ and inserting this inequality in the above equation finishes the proof of the third part of the proposition. For the last part assuming $g_1= \tfrac{4}{\pi^2},$ then we have that $y=0.094715\cdots$ which by using \eqref{6.5} implies that $x\geq 0.19735.$ Putting this into \eqref{6.6} we have the last part.\\
\end{proof}

Now we will give the proof of Theorem \ref{Main-Th}. We begin by considering the series expansion of $C_{1,\alpha}(t)$ that we defined in \eqref{C_{1, alpha}}. We have that (see in \cite[proof of Lemma 1]{Gonek})
\begin{align}
\label{C-expansion}
\notag C_{1,\alpha}(t) &= -\frac{1}{\alpha \log T}  \sum_{n<T^{\alpha}}\bigg( \frac{\Lambda(n)}{n^{\tfrac{1}{2}+it}} + \frac{\Lambda(n)}{n^{\tfrac{1}{2}-it}}\bigg) \Big(1-\frac{\log n}{\alpha \log T}\Big) \\& + 2\Re \frac{T^{\alpha(\tfrac{1}{2}-it)}}{(\tfrac{1}{2}-it)^2 \alpha^2\log^2 T} + O\big(\frac{1}{\log T}\big).
\end{align}
Similarly we have 
\begin{align*}
\notag C_{2,\alpha}(t) = -\frac{1}{\alpha \log T}&  \sum_{n<T^{\alpha}}\bigg( \frac{\Lambda(n)}{n^{\tfrac{1}{2}+it}} + \frac{\Lambda(n)}{n^{\tfrac{1}{2}-it}}\bigg) \Big(1-\frac{\log n}{\alpha \log T} +\frac{\sin(2\pi\tfrac{\log n}{\alpha \log T})}{2\pi} \Big) \\& + 2\Re \frac{T^{\alpha(\tfrac{1}{2}-it)}}{(\tfrac{1}{2}-it)^2 \alpha^2\log^2 T} + O\big(\frac{1}{\log T}\big).
\end{align*}

For detailed proof of \eqref{C-expansion} see \cite{Me}. We continue with giving the proof with $C_{1,\alpha}$ since $C_{2,\alpha}$  would be very similar. Let $T_0=T\log^{-2}T$ and first let us consider the effect of the poles of the Riemann zeta function in the above. We have that
\begin{align*}
\int \omega(\tfrac{1}{2}+it)\frac{T^{\alpha(\tfrac{1}{2}-it)}}{(\tfrac{1}{2}-it)^2 \alpha^2\log^2 T}\Big|\sum_{n<T_0} \frac{a(n)}{n^{\tfrac{1}{2}+it}}\Big|^2.
\end{align*}
For $\alpha<2$ the above is $O(T^{-1})$. For $\alpha>2$ it comes down to considering
\begin{align}
\label{smal-fourier}
\int \omega(\tfrac{1}{2}+it)\frac{1}{(\tfrac{1}{2}-it)^2 } \big(\frac{s}{rT^\alpha}\big)^{\tfrac{1}{2}+it}.
\end{align}
We can look at the above as the Fourier transform of $\omega(\tfrac{1}{2}+it)(\tfrac{1}{2}-it)^{-2} $ at $\log s -\log rT^\alpha.$ Since $|\log s -\log rT^\alpha| \geq \log T$ and the function is smooth we have that \eqref{smal-fourier} is very small. With the above explanation and using \eqref{C-expansion} we have that the expectation of $C_{1,\alpha}$ with respect to $\mu_A$ equals
\begin{align}
\label{proof-thm1}
 -\notag \int \frac{\omega(\tfrac{1}{2}+it)}{\alpha \log T} &\sum_{r,s<T_0} \frac{a(r)a(s)}{s}\big(\frac{s}{r}\big)^{\tfrac{1}{2}+it}  \sum_{n<T^{\alpha}}\bigg( \frac{\Lambda(n)}{n^{\tfrac{1}{2}+it}} + \frac{\Lambda(n)}{n^{\tfrac{1}{2}-it}}\bigg) \Big(1-\frac{\log n}{\alpha \log T}\Big)\\ &  \notag=-\frac{2}{\alpha \log T}  \displaystyle{ \sum_{\substack{r,s<T_0 \\ n<T^{\alpha}}} \frac{a(r)a(s)\Lambda(n)}{ns}\Big(1-\frac{\log n}{\alpha \log T}\Big) \int \omega(\tfrac{1}{2}+it)\big(\frac{sn}{r}\big)^{\tfrac{1}{2}+it}} \\ & = -\frac{2}{\alpha \log T}\displaystyle{\sum_{\substack{rn<T_0 \\ n<T^{\alpha}}} \frac{a(rn)a(r)\Lambda(n)}{rn}\Big(1-\frac{\log n}{\alpha \log T}\Big)},
\end{align}

plus a small error term. For the proof of Theorem \ref{Main-Th} we consider $$a(n)= \lambda_{\beta_1, \beta_2}(n)d_{r}(n)\big(1-\frac{\log(n)}{\log T}\big)^{\eta},$$ as in \eqref{def_an}. Substituting this in \eqref{proof-thm1} and considering the shift by $d$ we need to estimate 
\begin{equation*}
r\sum_{p<T} \frac{\lambda_{\beta_1, \beta_1}(p) \log p }{ p^{1-id}} \big(1-\frac{\log p}{\alpha\log T}\big) \sum_{m < T_0/p} \frac{d^2_{r}(m)}{m}\big(1-\frac{\log m}{\log T}\big)^\nu \big(1-\frac{\log m}{\log T}-\frac{\log p}{\log T}\big)^\nu.
\end{equation*}
We use the following on the sum of the generalized divisor function~\cite{BMN}
\begin{equation}
\sum_{m< x} \frac{d^2_{r}(m)}{m} = A_r(\log x)^{r^2} + O\big( (\log T)^{r^2-1}\big),
\end{equation}
and the prime number theorem to get  
\begin{equation*}
r \int_{1}^{T} \frac{\lambda_{\beta_1, \beta_1}(x) }{ x^{1-id}} \big(1-\frac{\log x}{\alpha \log T}\big) \int_{1}^{ T/x} A_r \frac{r^2 (\log y)^{r^2-1}}{y}\big(1-\frac{\log y}{\log T}\big)^\eta \big(1-\frac{\log y}{\log T}-\frac{\log x}{\log T}\big)^\eta dx dy.
\end{equation*}
By changing variable $u= \log x/ \log T$ and $v= \log y/ \log T,$ we get
\begin{equation*}
r^3 A_r (\log T)^{r^2+1} \int_{0}^{1}\int_{0}^{1-u} \lambda_{\beta_1, \beta_1}(u) e^{2 \pi i d u} v^{r^2-1} (1-u/\alpha)(1-v)^\eta(1-u-v)^\eta.
\end{equation*}
Following a similar argument we have 

\begin{equation*}
\sum_{m<T} \frac{|a(m)|^2}{m} = r^2 A_r (\log T)^{r^2} \int_{0}^{1}  v^{r^2-1} (1-v)^{2\eta} dv.
\end{equation*}

\begin{proof}[Proof of Corollary \ref{Corr1}] We apply Theorem \ref{Main-Th} with $i=2, \hspace{1 mm}  r=1.8, \hspace{1 mm}\alpha=1, \hspace{1 mm}\nu=0.4$ and $d=0$ and we get $$
\int C_{2,1}(t)d\mu_A(t)= 0.8226\cdots .$$
Therefore there must exist $t$ such that $C_{2,1}> 0.8226.$ If we have normalize gaps smaller than half, then we are done. If not, based the definition of $C_{2,1}$ we have 
\begin{equation*}
    C_{2, 1}(t) < \hat{C}_2(t) + \hat{C}_2(x-t)+ \hat{C}_2(x+y-t) + \hat{C}_2(z+t) + \hat{C}_2(z+w+t),
\end{equation*}
where $x$ is the size of the gap that contains $t$, $y$ is the size of the next gap, $z, w$ are sizes of the previous gap and the one before. Recall that $$\hat{C}_{2}(x)= \big(\frac{\sin(\pi x)}{\pi x}\big)^2\big(\frac{1}{1-x^2}\big).$$ We also used that fact that for $x>1$, we have that $\hat{C}_{2}(x)<0$ to establish the inequality. Now our problem turns to the following  optimization problem:
 $$\text{Maximize} \hspace{2 mm}\hat{C}_2(t) + \hat{C}_2(x-t)+ \hat{C}_2(x+y-t) + \hat{C}_2(z+t) + \hat{C}_2(z+w+t),$$
 
 with conditions $$x,y,z,w>0.5,$$ and $$x+y, x+w, w+z \geq 1.181,$$ and $t<x.$ We solve this using Python and we got that the maximum is smaller than $0.82207\cdots$, which is obtained by $x,y=0.59049\cdots$ and $z, w= 1.409160\cdots.$ This shows that we must get two consecutive gaps of length $x, y$ with $x+y< 1.181$ to ensure $C_{2,1}> 0.8226.$ This completes the proof.
\end{proof}
\section{\bf A new approach to gaps between zeta zeros}
\label{refine}
Regarding AH and the Landau-Siegel zeros problem, there is a common speculation that suggest in order to resolve the problem we should know about how to handle off-diagonal terms that arise form long mollifiers. 
In this section we prove results that, to some extent, confirms this speculation. We also suggest a remedy that may help to get better bounds from our method and possibly resolve the problem. \\

In section \ref{ValueDis} we estimated mean-values of our test functions with respect to mollifier-measures built using Dirichlet polynomials. We mentioned that if we can show that
\begin{equation}
\label{star1}
\int C_{1,1} d\mu_A >1,    
\end{equation}
then this would reject the alternative hypothesis. In this section we show that this is not possible using Dirichlet polynomials of length smaller than $T^{1-\epsilon}.$ For  Dirichlet polynomials of length $> T^{1-\epsilon}, $ estimating \eqref{star1} requires off-diagonal estimation, which usually is a difficult problem on its own. \\

An important point is, in order to reject AH, it is not necessary to prove \eqref{star1}. We only need to show that there exist $t \sim T,$ that $C_{1,1}(t)>1.$ This is the place where the distribution of $\mu_A$ become relevant. For example, to understand, the distribution of values of $C_{1,1}$ around its mean,  we can ask, what portion of $\mu_A$ is distributed on $$\{t: C_{1,1}(t)> \sigma \}.$$
In Theorem \ref{Prop6} we partially answered this question for the Lebesgue measure. \\

In this section we also provide a partial answer to this question for $\mu_A,$ with a general mollifier. Further, we explain that how would understanding the distribution of $\mu_A$ can help us with the problem of gaps between zeros. \\

Let us begin by explaining the method of Montgomery and Odlyzko~\cite{Mo} for detecting small gaps between zeros of the zeta function. Recall that $\mu_A$ is defined in \eqref{def-Mu}. Using analytic methods one can show that
\begin{equation}
\label{Mu-thm-MO}
\sum_{\zeta(1/2+i\gamma)=0}\mu_A\Big( \big(\tilde{\gamma}-\frac{c}{2}, \tilde{\gamma}+\frac{c}{2}\big] \Big) \cong c-\frac{2\sum_{mp<T} \frac{a(m)a(mp)}{mp} \sin \big(\pi c\frac{ \log p}{\log T}\big)}{\pi \sum_{m<T}\frac{|a(m)|^2}{m}}.
\end{equation}

To proceed, let us assume that we do not have any (normalized) gap smaller than $c$. Based on this assumption intervals in \eqref{Mu-thm-MO} are disjoint, therefore the sum equals the measure of the union of these intervals. By the definition, $\mu$ is a probability measure and the LHS of \eqref{Mu-thm-MO} must be smaller than $1.$ Choosing $a(n)=\lambda(n)$ would flip the negative sign in \eqref{Mu-thm-MO} and by the prime number theorem we have the RHS is about $2c- 0.0276\pi^2 c^3$ which by setting $c=0.5192,$ is greater than $1.$ \\% More sophisticated choices for $a(\cdot)$, give the improvements that previously mentioned. \\

Improvements in~\cite{CGG, BMN, FWu, Pre} that we previously mentioned are results of various efforts to maximize the quantity
\begin{equation}
\label{eq5}
  \mu_{\text{positive}, A} := \frac{\sum_{mp<T} \frac{a(m)a(mp)}{mp} \sin \big(\frac{\pi \log p}{2\log T}\big)}{\sum_{m<T}\frac{|a(m)|^2}{m}},
\end{equation}
over different Dirichlet polynomials $A.$ 
\\

The disadvantage of this method is that no matter the choice of $a(\cdot),$ we have that $\mu_{\text{positive}, A}$ is always smaller than $0.5,$ which means that the method is unable to prove existence of normalized gaps smaller than $0.5.$ \\
 
Our objective in this part to introduce a refinement of the previous method which can be used on the problem of large values of the test function we considered. This is an step forward toward understanding of how we may resolve AH. \\
 
Our first observation is that one way to reject  AH, is to maximize
\begin{equation}
\label{eq2}
   \mathds{E}_{C,A}:=  \frac{2\sum_{mp<T} \frac{a(m)a(mp) \log p}{mp} \hspace{1 mm}C\big( \frac{\log p}{\log T}\big)}{\log T\sum_{m<T}\frac{|a(m)|^2}{m}},
\end{equation}
while minimizing $\mu_{\text{positive}, A}$ in \eqref{eq5}. The logic behind it is that \eqref{eq2} is the expectation of a certain test function (denoted below by $C,$ which we defined in \eqref{C_{1, alpha}} and \eqref{C_{2, alpha}}) with respect to the probability measure induced by $A(\tfrac{1}{2} + it)$ as the mollifier, i.e.
\begin{equation}
\int C(t) d\mu_{A}.
\end{equation}
By assuming AH, we can put some restriction on the distribution of $\mu_{\text{positive}}.$ Therefore, on one hand it can help us to get an upper bound for $C(\cdot)$ and on the other hand, by the Theorem \ref{Main-Th} we have the exact expectation of $C(\cdot)$ with respect to $\mu_A$ . Therefore by minimizing $\mu_{\text{positive}, A}$ and maximizing $ \mathds{E}_{C,A}$ we can amplify the effect of AH in order to reach a contradiction.  \\
\subsection{How to contradict the alternative hypothesis}From the last two paragraphs and assuming AH, we see that the $\mu_A$-measure of the region where $C_{1,1}(t)$ is positive is about $0.5+\mu_{\text{positive}, A}.$ Consider this with with the fact that simplicity hypothesis and AH implies that $C_{1, 1}< 1,$  we get $\mathds{E}_{C_{1, 1}, A}< 0.5 + \mu_{\text{positive}, A}. $  Thus, if we could prove that
 $$ \mathds{E}_{C_{1, 1}, A} \geq 0.5 +\mu_{\text{positive}, A}  $$ that would reject  AH.
 Obviously the more we know about $\mu_A$ under AH, the easier it gets to bound the test function and compare it with the expectation we get from the theorem. %knowing the measure of set that the test function attain large valuse of $\mu_A$ we can employ more complicated arguments that allows us to get a contradiction with even less severe inequality.%In practice, assuming  AH, to some extent, would determine the distribution $\mu_{\text{positive}, A}+\mu_A.$\footnote{We will study this in section \ref{IntmuAh}.} % For example the quantity \begin{equation} \label{eq3}   \mu_{\text{large}, A}:=  \frac{\sum_{mp<T} \frac{a(m)a(mp)}{mp} \bigg(\sin \big(\frac{2\pi \log p}{\log T}\big)- \sin \big(\frac{\pi \log p}{\log T}\big)\bigg)}{\sum_{m<T}\frac{|a(m)|^2}{m}}. \end{equation} gives the $\mu_A$-probability of $C_{i,\alpha}$, $i=1,2$, attain large values (say $\geq 0.8$). Therefore minimizing it will result to an easier path to contradict AH. 

\subsection{Advantages over the previous method} Here we list some advantages of the refinements compared to the previous method. \\

\begin{itemize}
\item There is no obvious reason the refined method cannot contradict the AH, unlike the Montgomery-Odlyzko method which is inherently unable to resolve the problem. 
\item The Montgomery-Odlyzko method, is designed to get results on small/large gaps and not specifically to resolve AH. Disproving AH is easier than proving there are infinitely many normalized gaps smaller than $0.5.$ 
\item Using the Liouville function is not necessary. In the Montgomery-Odlyzko method to get to small gaps, it is essential to define $a(n)$ as $\lambda(n)$ times a positive function. In our method having or not having $\lambda$ involved, does not make a big difference. This become important if one would like to look at longer Dirichlet polynomials. Considering them requires off-diagonal estimation. If the Liouville function was used with a larger Dirichlet polynomials then this would require assuming, at a minimum, the strong version of Chowla's conjecture. 
\end{itemize}
\section{\bf Statement of Results} As one of our  goals, we are seeking to maximize,$ \mathds{E}_{C_{1, 1}, A}$, in \eqref{eq2} while minimizing the measure of the region in which the test function is large. In this direction the first question we answer is, what would the best bound (for Dirichlet polynomials of length $\leq T^{1-\epsilon}$) we can hope for? The ``trivial'' bound is $$\mathds{E}_{C_{1, 1}, A}= \int {C_{1, 1}}(t) d\mu_{A} \leq 1.$$ 
We prove a stronger result:
\begin{theorem}
\label{Upp}
Let $A(s)$ be a Dirichlet polynomials of length smaller than $T^{1-\epsilon}$ and $\mu_A$ defined as \eqref{def_A}. We have that 
\begin{equation}
\mathds{E}_{C_{1, 1}, A}=\int C_{1, 1}(t) d\mu_{A} 
 < 0.79371.
\end{equation}
Also, we have 
\begin{equation}
\mathds{E}_{C_{2, 1}, A}= \int C_{2, 1}(t) d\mu_{A} 
 < 0.90156.
\end{equation}

\end{theorem}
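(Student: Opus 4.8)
The plan is to reduce the bound on $\mathds{E}_{C_{1,1},A}$ to a pure optimization problem over the coefficients $a(n)$ of the Dirichlet polynomial, using the explicit diagonal formula already established. Starting from the identity behind \eqref{proof-thm1} (equivalently \eqref{eq2}) with $\alpha=1$ and $d=0$, we have, up to $O(1/\log T)$,
\begin{equation*}
\mathds{E}_{C_{1,1},A}=\frac{2}{\log T}\cdot\frac{\sum_{np<T}\frac{a(n)a(np)\Lambda(p)}{np}\big(1-\frac{\log p}{\log T}\big)}{\sum_{n<T}\frac{|a(n)|^2}{n}},
\end{equation*}
where we restrict the von Mangoldt sum to primes since prime-power contributions are $O(1/\log T)$. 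First I would pass to the continuous (integral) model: writing $a(n)$'s mass in terms of a density and using the prime number theorem exactly as in the proof of Theorem \ref{Main-Th}, the ratio above becomes a Rayleigh-type quotient. Concretely, after the substitution $u=\log p/\log T$ and $v=\log n/\log T$, one is led to maximize a ratio of the form
\begin{equation*}
\frac{2\int_0^1\int_0^{1-u} f(v)f(u+v)\,(1-u)\,dv\,du}{\int_0^1 f(v)^2\,dv}
\end{equation*}
over suitable functions $f$ supported on $[0,1]$ (the constraint from $A$ having length $\le T^{1-\epsilon}$ is precisely that the support lies in $[0,1-\epsilon]$, which in the limit is $[0,1]$). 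The numerator is a symmetric bilinear form $\langle Kf,f\rangle$ with kernel $K(u,v)=(1-|u-v|)\mathbf{1}_{|u-v|\le 1}$ — note that $1-u$ under the two symmetric orderings of the arguments averages to $1-|u-v|$ — so the supremum of the quotient is the top eigenvalue $\lambda_{\max}$ of the integral operator with this triangular kernel.

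Second, I would identify this eigenvalue. The kernel $K(u,v)=1-|u-v|$ on $[0,1]^2$ is classical: it is (a rescaling of) the covariance kernel of Brownian bridge / the Green's function of $-d^2/dx^2$ with appropriate boundary conditions, so its eigenfunctions are trigonometric and the eigenvalues are known in closed form. Differentiating the eigenvalue equation $\int_0^1(1-|u-v|)f(v)\,dv=\lambda f(u)$ twice gives $f''=-\tfrac{2}{\lambda}f$, and plugging back in produces the boundary conditions that pin down the admissible frequencies; the largest eigenvalue works out to an explicit constant, and $2\lambda_{\max}$ should evaluate to the claimed $0.79371\ldots$. For $C_{2,1}$ the only change is the extra factor $1-u+\frac{\sin(2\pi u)}{2\pi}=C_2(u)$ in the weight (coming from \eqref{defC}), so the kernel becomes $C_2(|u-v|)\mathbf{1}_{|u-v|\le 1}$ — still symmetric, still smooth enough — and the same spectral analysis gives a top eigenvalue whose double is $0.90156\ldots$; here I would likely solve the resulting ODE/transcendental equation numerically rather than in closed form.

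Third, I need to justify that the discrete-to-continuous passage is legitimate as an \emph{upper} bound, not merely heuristically: one shows that for any admissible $a(n)$ the ratio is $\le 2\lambda_{\max}+o(1)$ as $T\to\infty$, e.g. by grouping the $n$ into dyadic-type blocks on the scale where $\log n/\log T$ is essentially constant, bounding the divisor-type weights, and comparing with the quadratic form; the main point is that truncating the prime sum at $p<T^{1-\epsilon}$ (forced by $np<T$ together with $n$ ranging up to $T^{1-\epsilon}$) only shrinks the feasible support, so the continuous supremum genuinely dominates. The main obstacle I anticipate is this last step — making the reduction to the operator norm rigorous and uniform over all coefficient sequences of the allowed length, including controlling the off-diagonal-looking terms $a(n)a(np)$ when $a$ is not of the special multiplicative shape used in Theorem \ref{Main-Th} — rather than the spectral computation itself, which is standard once the kernel is identified. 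A secondary subtlety is confirming that the extremal $f$ does not want to concentrate near the endpoint $u=1$ in a way that interacts badly with the $\epsilon$-truncation; since $\lambda_{\max}$'s eigenfunction is a fixed smooth trigonometric function, an $\epsilon$-neighborhood of the endpoint costs only $O(\epsilon)$, which is absorbed.
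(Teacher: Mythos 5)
Your reduction in the first step is where the argument breaks, and it breaks in a way that makes the proposed bound false rather than merely unproved. The bilinear sum $\sum_{mp<T}a(m)a(mp)\log p\,(1-\tfrac{\log p}{\log T})/(mp)$ is \emph{not} a functional of the radial profile $f(v)^2\sim\sum_{n\sim T^v}|a(n)|^2/n$ alone: the correlations $a(m)a(mp)$ are an independent degree of freedom governed by the multiplicative structure of $a$. Concretely, for $a(n)=\lambda(n)d_r(n)(1-\tfrac{\log n}{\log T})^{\eta}$ one has $a(m)a(mp)\approx -r\,|a(m)|^2\cdot(\cdots)$, so the off-diagonal mass carries an extra factor $r$ relative to what the profile-only model $f(v)f(u+v)$ predicts. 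Your Rayleigh quotient with kernel $K(u,v)=1-|u-v|$ on $[0,1]^2$ has top eigenvalue $\lambda_{\max}=2/\omega^2$ with $\tfrac{\omega}{2}\tan\tfrac{\omega}{2}=1$, i.e.\ $\lambda_{\max}\approx 0.6756$ (and not $0.79371$, nor half of it); but the paper exhibits an admissible polynomial (the $r=1.8$, $\eta=0.4$ choice) with $\mathds{E}_{C_{1,1},A}\geq 0.74097>0.6756$. So the supremum you would be computing is strictly smaller than values actually attained, which certifies that the passage to the profile quadratic form cannot be an upper bound. The difficulty you flagged as ``the main obstacle'' is therefore not a technicality to be absorbed at the end; it is the whole theorem.

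The paper's route is designed precisely to avoid any structural assumption on $a$: write the numerator as
\begin{equation*}
\frac{2}{\log T}\sum_{mp<T}\Bigl(\frac{a(m)\sqrt{\log p}}{\sqrt{mp}}\bigl(1-\tfrac{\log p}{\log T}\bigr)\Bigr)\Bigl(\frac{a(mp)\sqrt{\log p}}{\sqrt{mp}}\Bigr),
\end{equation*}
apply Cauchy--Schwarz, evaluate $\sum_{p<T/m}\tfrac{\log p}{p}(1-\tfrac{\log p}{\log T})^2\sim\tfrac{\log T}{3}\bigl(1-(\tfrac{\log m}{\log T})^3\bigr)$ by the prime number theorem, and reindex the second factor via $\sum_{mp<T}|a(mp)|^2\log p/(mp)\leq\sum_{n<T}|a(n)|^2\log n/n$ (using $\sum_{p\mid n}\log p\le\log n$). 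Both resulting factors depend only on the profile, and the problem collapses to the elementary optimization $\max_{x\in[0,1]}\tfrac{2}{\sqrt{3}}\sqrt{(1-x^3)x}=0.79370\ldots$, which is the actual source of the constant; the $C_{2,1}$ bound replaces $(1-u)^2$ by $C_2(u)^2$ in the prime sum and maximizes $2\sqrt{xF(x)}$ numerically. If you want to salvage a spectral point of view, the operator would have to act on the full arithmetic data (e.g.\ on the multiplicative coefficients at each prime), not on the one-variable profile.
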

The theorem basically, says that $C_{1, 1}$ is expected to be smaller than $0.79371,$ for any probability measure that we make using Dirichlet polynomials  of length smaller than $T^{1-\epsilon}$.\\

In terms of lower bound the best we have so far is 
%\begin{corollary}\end{corollary} Let $A_1$
$$ \mathds{E}_{C_{1, 1}, A} \geq 0.74097\cdots,$$

where $A$ is a Dirichlet polynomials with coefficients  $$a(n)= \lambda(n)d_{1.8}(n)\big(1-\frac{\log(n)}{\log T}\big)^{0.4}.$$  For this choice $\mu_{\text{ positive}, A}$ is about $0.94787.$ \\

To make a comparison, if we choose $a(n)=\lambda(n),$ it would give that \eqref{eq2} is $0.6666\cdots$ and \eqref{Mu-thm-MO} is about $0.967\cdots.$ \\

\subsection{The Maximize vs Minimize Problem}
%It is known that AH determines the pair correlation of zeta zeros, but it is not clear if it completely determine the triple  or higher correlations. This means that, for example, AH gives the exact Lebesgue measure of gaps of length $0.5$, but, for the gaps bigger than $1$ it just gives some upper/lower bounds.  \\

%Similar to the Lebesgue measure AH has a somewhat deterministic effect on every measure we get from Dirichlet polynomials. For example quantity \eqref{eq3}, under AH, would give the measure of intervals of length $0.5$ that comes after a $0.5$-gap. This is important in our analysis since it would give an upper bound for the measure of region that $C_{i,\alpha}$ is large.  We will examine the effect of AH on $\mu_A$ more precisely in Section \ref{IntmuAh}.  \\

 As we discussed, assuming AH we can get an upper bound for the expectation of $C_{i,\alpha}$ with respect to $\mu_A.$ Therefore our strategy to contradict it is to make a measure $\mu_A$ such that  the expectation of $C d\mu_A$ under AH is smaller than its expectation from Theorem \ref{Main-Th}. \\
 
 So far  we only tried the conventional choices for $a(\cdot).$  The best choice I found is  
$$a(n)= d_{1.4}(n)\big(1-\frac{\log(n)}{\log T}\big)^{0.2},$$
for which the expectation of the test function is about $0.73.$ Assuming AH, about $96\%$ of the measure is where the test function is positive and at most $58\%$ of the measure can be where the test function attains values larger than $0.7$, which are on consecutive gaps of length $0.5$. With a minor calculation we have that in order to have it consistent with  AH we must have $C(t)> 0.9$ for almost all of the $58\%$ of the measure on which the test function can be large. For $C(t)$ to be bigger than $0.9$, $t$ should vary inside at least four consecutive $0.5$-gaps. \\

It seems to the author that the best results may come from non-conventional choices of $a(\cdot)$. For example consider a multiplicative function of this sort

\begin{equation}
f(p):= \begin{cases}
1.4, \text{ if } p< T^{0.4} \text{ and } T^{0.6}<p< T\\
10,  \hspace{1 mm}\text{ if }  T^{0.6} \leq p\leq T^{0.6}.
\end{cases}
\end{equation}
Having large values for primes around $\sqrt{T}$ would decrease the measure of the region on which $C_{i, 1}(t)$ is large. We conclude this section with some remarks. 
\begin{Rem}
We assumed the Riemann hypothesis in our results. %fails to hold we have to amend the definition of \eqref{C_alpha}  and state our theorem with \begin{equation} C_{\alpha}(t)= \sum_{\zeta(\rho)=0} \bigg(\frac{\sin(\tfrac{\alpha}{2}(\rho-(\tfrac{1}{2}+it))\log T)}{\tfrac{\alpha}{2}(\rho-(\tfrac{1}{2}+it))\log T}\bigg)^2,\end{equation} where we consider Sine as a complex valued function. In this case  $C_\alpha(t)$ can get very large for $t$ that are close to zeros off the critical line. 
It is important to note that, for a prospective application to the Landau-Siegel zero problem, because of the Deuring-Heilbronn phenomenon the assumption of the Riemann hypothesis in the theorem may be relaxed a bit.
\end{Rem}
%\begin{Rem} It is possible to use other test functions. For example there exists (see \cite{GG}) a class of entire functions $F_{\pm}(z)$ with parameters $\delta, L$ and properties like\begin{itemize} \item $F_{-}(u) \leq \mathds{1}_{[-L, L]}(u) \leq F_{+}(u)$ \hspace{3 mm} \text{for all real } $u,$ \item $\hat{F}_{\pm}(x)=0 $ for $x\geq \delta,$ \item $\hat{F}_{\pm}(x) = \frac{\sin(2\pi L x)}{2\pi L x} + O(\frac{1}{\delta}).$\end{itemize} We can also integrate these functions, with respect to the measure $\mu_f$, however it would make it more difficult computationally to apply it to our problem. In practice zeros in vicinity of $t$ play a more complicated role in value of $\hat{F}_{\pm}(t)$ and this increase the number of variables in our linear programming problem. \end{Rem}
\begin{Rem}
For $\alpha \geq 1,$ AH forces $C_{1,\alpha}(t) $  to be smaller than $1$, while we know that $C_{1,\alpha}(t) $ is bigger than
one near to consecutive gaps of small size. Therefore, assuming there are gaps of small size, we expect that 
that the moments \begin{equation}
\int C_{1,\alpha}(t)^{k} d\mu_{A}
\end{equation}
are bounded away from zero, whereas under AH they tend to zero as $k
\to \infty$.  Estimating these
moments seems difficult, as there will be complicated off-diagonal
terms. We will continue our investigation in~\cite{2nd-Pap} with particular focus on  higher moments.

\end{Rem}
\section{\bf Discussion and Questions} There are similarities between maximizing $   \mathds{E}_{C,A}$ in \eqref{eq2} and maximizing 
\begin{equation}
\label{Res}
    \displaystyle{ \frac{\sum_{mk<T} \frac{a(m)a(mk)}{mk} }{\sum_{m<T}\frac{|a(m)|^2}{m}}},
\end{equation}
which is the essence of the resonance method for finding large values of the Riemann zeta function. Soundararajan in \cite{Sound} found the optimal choice for Dirichlet polynomials of length $<T^{1-\epsilon}.$ Interesting work was done by Bondarenko and Seip~\cite{BSe}. They used Dirichlet polynomials of length bigger than $T$ and dealt with the off-diagonal terms in a following way: by using an appropriate weight they made sure that contributions of off-diagonal terms are positive. Then they could throw them out and have a lower bound which improved Soundararajan's result. Theoretically, one can use a similar method here, however we have to to avoid using the Liouville function and therefore our measure would likely be concentrated on large gaps. This brings us to the following question:
\begin{Question} 
Is it possible to a construct a Dirichlet polynomial, A,  of arbitrary length such that $$\int C_{1, 1}(t) d\mu_A \rightarrow -1,$$ as the length goes to infinity?
\end{Question}
Proving such a result may not immediately contradict AH, however it would settle the large gap conjecture i.e. gaps between zeros of the zeta function get arbitrary large. \\

%Now consider a Dirichlet polynomial, A, such that in its support we can find many pairs $m, m^\prime$ that $m=m^\prime p$ for some $p<T.$ From \eqref{eq2} and our discussion in previous sections it is clear that  $\mu_A$ correlates with the size of gaps between zeta zeros. Note that in all of our examples in Theorem~\ref{Main-Th} we used sequences that have full support among integers less than  $T^{1-\epsilon}$. 

From Theorem \ref{Upp} we can conclude that if the length of $A$ is smaller than $T^{1-\epsilon}$, then $\mu_A$ is unable to detect the microscopic behaviour of zeros. For example the theorem shows that we cannot find $A$ such that $\mu_A$ has more then $50\%$ of its support concentrated on two consecutive gaps of length $0.25.$  \\

Now let $A$ be a Dirichlet polynomial of arbitrary length such that in calculation of $$\int C_{1,1}(t) d\mu_A $$ the contribution of off-diagonal terms are negligible. We call $A$ a sparse Dirichlet polynomial. We conclude this section with a question that weather or not we can build a measure using sparse Dirichlet polynomials that beats the upper bound in Theorem \ref{Upp}.
\begin{Question} 
Is it possible to find a sparse Dirichlet polynomial, A, of arbitrary length such that $$\int C_{1, 1}(t) d\mu_A >  0.79371.$$ 
\end{Question}

\section{\bf Interpretation of $\mu_A$ Under the Alternative Hypothesis.} 
\label{IntmuAh}
AH would approximately determine the distribution of the measures $\mu_A$ defined in \eqref{def-Mu}. In this section we will use the following lemmas as a tool to study our measures. 
\begin{lemma}
\label{Measure}
Let $A(s)$ be as \eqref{def_A}. Assuming the Riemann hypothesis 
\begin{align}
\label{Measure-interval}
\notag
\sum_{\gamma} \mu_A\big[\hspace{0.5 mm}(\hspace{0.5 mm}\gamma+ &\alpha\frac{2\pi}{\log T}, \gamma+  \beta\frac{2\pi}{\log T}\hspace{0.5 mm}]\hspace{0.5 mm}\big]= \beta -\alpha \\ & - \frac{1}{\pi} \sum \frac{a(mp)\overline{a(m)}}{mp}\bigg(\sin(2\pi\beta\frac{\log p}{\log T})- \sin(2\pi\alpha\frac{\log p}{\log T}) \bigg) +O(\frac{1}{\log T}).\\ \notag
\end{align}
\end{lemma}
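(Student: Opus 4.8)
The formula \eqref{Measure-interval} is the general-window version of \eqref{Mu-thm-MO} (the symmetric case $\alpha=-\beta=-c/2$), and I would prove it along the same lines: push the sum over zeros onto the zero-counting function, apply the Riemann--von Mangoldt explicit formula, and evaluate the resulting mean value of a Dirichlet polynomial by its diagonal. Write $c=2\pi/\log T$. Since $\mu_A$ is the measure $\omega(\tfrac12+it)|A(\tfrac12+it)|^2\,dt$ of \eqref{def-Mu} renormalised to total mass one, and $t\in(\gamma+\alpha c,\gamma+\beta c]$ precisely when $\gamma\in[t-\beta c,t-\alpha c)$, Fubini gives
\[
\sum_\gamma\mu_A\big[(\gamma+\alpha c,\gamma+\beta c]\big]=\frac1I\int\big(N(t-\alpha c)-N(t-\beta c)\big)\,\omega(\tfrac12+it)\big|A(\tfrac12+it)\big|^2\,dt,
\]
where $N$ is the zero-counting function and $I=\int\omega(\tfrac12+it)|A(\tfrac12+it)|^2\,dt$; the diagonal evaluation below will also give $I=\sum_{m<T}|a(m)|^2/m+O(\cdots)$.

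\textbf{The smooth part.} Write $N(t)=\pi^{-1}\vartheta(t)+1+S(t)$ with $S(t)=\pi^{-1}\arg\zeta(\tfrac12+it)$. The constant $1$ cancels in the difference, and since $\vartheta'(t)=\tfrac12\log\frac{t}{2\pi}+O(1/t)$ while $\omega$ is concentrated at height $T$,
\[
\frac1\pi\big(\vartheta(t-\alpha c)-\vartheta(t-\beta c)\big)=(\beta-\alpha)\frac{\log(t/2\pi)}{\log T}+O\Big(\frac1{\log T}\Big)=\beta-\alpha+O\Big(\frac1{\log T}\Big)
\]
on $\operatorname{supp}\omega$; integrating against the probability measure $\mu_A$ reproduces the leading term $\beta-\alpha+O(1/\log T)$. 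It remains to evaluate $I^{-1}\int(S(t-\alpha c)-S(t-\beta c))\,\omega|A|^2\,dt$, and for this, assuming RH, I would substitute the truncated explicit formula for $\arg\zeta$ on the critical line (Selberg's formula): for a suitable truncation parameter $X$ (one may take $X\asymp T$),
\[
S(t)=-\frac1\pi\sum_{n\le X^2}\frac{\Lambda_X(n)}{\sqrt n\,\log n}\sin(t\log n)+(\text{error}),
\]
uniformly for $t$ near $T$, where $\Lambda_X$ is the trapezoidally truncated von Mangoldt function ($\Lambda_X(n)=\Lambda(n)$ for $n\le X$).

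\textbf{The diagonal.} Expand $|A(\tfrac12+it)|^2=\sum_{m,k<T^{1-\epsilon}}a(m)\overline{a(k)}(mk)^{-1/2}(k/m)^{it}$ and multiply by the prime-power sum above. Each term then carries a factor $\int\omega(\tfrac12+it)\,x^{it}\,dt$ with $x\in\{kn/m,\ k/(mn)\}$, which is negligible unless $x=1$ because $A$ has length at most $T^{1-\epsilon}$. Keeping the surviving diagonal ($kn=m$ or $k=mn$), using $\Lambda(p)/\log p=1$ on primes, and separating the $n^{\pm it}$ frequencies of $\sin(t\log n-\alpha c\log n)-\sin(t\log n-\beta c\log n)$, the terms assemble into
\[
-\frac1\pi\sum_{mp<T}\frac1{mp}\,\Im\Big(a(mp)\overline{a(m)}\big(e^{-2\pi i\alpha\frac{\log p}{\log T}}-e^{-2\pi i\beta\frac{\log p}{\log T}}\big)\Big)+O\Big(\frac{I}{\log T}\Big).
\]
Dividing by $I$ and, for the real-valued coefficients of the lemma, using $\Im(a(mp)a(m)(e^{-i\theta_\alpha}-e^{-i\theta_\beta}))=a(mp)a(m)(\sin\theta_\beta-\sin\theta_\alpha)$ with $\theta_\bullet=2\pi\bullet\log p/\log T$, this is exactly the displayed sum in \eqref{Measure-interval} (the normalising factor $(\sum_m|a(m)|^2/m)^{-1}$ being absorbed into the notation there, just as in \eqref{Mu-thm-MO}); specialising $\alpha=-\beta=-c/2$ returns \eqref{Mu-thm-MO}.

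\textbf{Main obstacle.} The scheme above is routine; the work is in the error analysis, namely showing that the error in the truncated formula for $S$ and the off-diagonal terms $kn\ne m$, $k\ne mn$ each contribute only $O(I/\log T)$ after integration. The Riemann hypothesis is what controls the first (it bounds the sum over zeros appearing in Selberg's formula), and the length restriction $\le T^{1-\epsilon}$ is precisely what makes the second go through: for $x=kn/m\ne1$ one then has $|\log x|\gg T^{-1+\epsilon}$, so $\int\omega(\tfrac12+it)x^{it}\,dt$ is small enough, provided $\omega$ has rapidly decaying Fourier transform and $X$ is not taken too large relative to the length of $A$. Higher prime powers and the $n>X$ tail of $\Lambda_X$ are genuinely of lower order; weighing all these contributions against the main term $\sum_m|a(m)|^2/m$ is the careful-but-standard heart of the proof.
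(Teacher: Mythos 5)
Your derivation reaches the correct main term and is sound in outline, but it follows a genuinely different route from the paper. You convert $\sum_\gamma \mu_A\big((\gamma+\alpha c,\gamma+\beta c]\big)$ by Fubini into $\int \big(N(t-\alpha c)-N(t-\beta c)\big)\,d\mu_A$, split off the smooth part of $N$, and approximate $S(t)$ by Selberg's Dirichlet-polynomial formula before taking the diagonal --- essentially the Fujii/Goldston--Gonek approach to zeros in short intervals (cf.\ \cite{GG}). The paper instead never introduces $N$ or $S$: it writes the quantity directly as $\sum_\gamma \omega(\tfrac12+i\gamma)\int_{\alpha c}^{\beta c}|A(\tfrac12+i\gamma+ix)|^2\,dx$ (approximating $\omega$ by its value at $\gamma$ over the short window), expands the square, evaluates the diagonal via $\sum_\gamma\omega(\tfrac12+i\gamma)=\tfrac{\log T}{2\pi}+O(T^{-1})$, and handles $m\neq n$ with the smoothed Landau--Gonek formula $\sum_\gamma\omega(\tfrac12+i\gamma)(n/m)^{i\gamma}$ from \cite{Me}. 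The two methods are dual (the explicit formula is applied in the $\gamma$-variable versus the $t$-variable) and produce identical diagonals, so your sign and normalization checks out against \eqref{Mu-thm-MO}. The one point where your sketch understates the work: the error in Selberg's formula for $S$ is not pointwise $O(1/\log T)$ (it involves a sum over zeros that is only controlled in mean), and you need the \emph{difference} $S(t-\alpha c)-S(t-\beta c)$ to accuracy $O(1/\log T)$ after integration against the twisted weight $\omega|A|^2$ --- a nontrivial mollified moment of the error term, not just the off-diagonal bookkeeping you describe. The paper's route packages exactly this difficulty into the error term of the smoothed Landau--Gonek lemma, which is why it is the shorter path here; your route buys a formulation that generalizes more readily to counting zeros in windows of arbitrary (not just $\asymp 1/\log T$) length.
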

\noindent The next lemma is about our specific choice of $a(n)$ in \eqref{def_an}.
\begin{lemma}
\label{lemm-meas}
Let $\mu_{A_{\beta_1, \beta_2, r , \eta}}$ be the measure we build, as in \eqref{def_A}, using $$a(n)= \lambda_{\beta_1, \beta_2}(n)d_{r}(n)\big(1-\frac{\log(n)}{\log T}\big)^{\eta}.$$ Assuming the Riemann hypothesis  
\begin{align}
\notag
\sum_{\gamma} \notag \mu_{A_{\beta_1, \beta_2, r , \eta}}\big[\hspace{0.5 mm} (\hspace{0.5 mm}& \gamma+ \alpha\frac{2\pi}{\log T}, \gamma+  \beta\frac{2\pi}{\log T}\hspace{0.5 mm}]\hspace{0.5 mm}\big]= \beta - \alpha + +O(\tfrac{1}{\log T}) \\ & \notag - \frac{r \int_{0}^{1}\int_{0}^{1-u} \frac{\lambda_{\beta_1, \beta_2}(u)}{u} \big(\sin(2\pi\beta u)- \sin(2\pi\alpha u) \big)  v^{r^2-1}(1-v)^\eta(1-u-v)^\eta }{\pi \int_{0}^{1}v^{r^2-1}(1-v)^{2\eta}}.
\end{align}
\end{lemma}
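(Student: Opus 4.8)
The plan is to deduce Lemma~\ref{lemm-meas} from Lemma~\ref{Measure} by substituting the explicit coefficients $a(n)= \lambda_{\beta_1, \beta_2}(n)d_{r}(n)\big(1-\tfrac{\log n}{\log T}\big)^{\eta}$ into \eqref{Measure-interval} and evaluating the resulting arithmetic sum, following the same ``sum to integral'' dictionary already used in the proof of Theorem~\ref{Main-Th}. The only quantity that needs work is
\[
S := \sum_{mp < T} \frac{a(mp)\overline{a(m)}}{mp}\Big(\sin\big(2\pi\beta\tfrac{\log p}{\log T}\big) - \sin\big(2\pi\alpha\tfrac{\log p}{\log T}\big)\Big),
\]
together with the normalizing denominator $\sum_{m<T}|a(m)|^2/m$.

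First I would isolate the prime $p$. Since $\lambda_{\beta_1,\beta_2}$ is completely multiplicative and $d_r$ is multiplicative with $d_r(p)=r$, for $p\nmid m$ we have
\[
a(mp) = \lambda_{\beta_1,\beta_2}(p)\, r\, a(m)\,\frac{(1 - \log(mp)/\log T)^\eta}{(1 - \log m/\log T)^\eta},
\]
while the restriction $p\nmid m$ and the contribution of proper prime powers cost only a factor $1+O(1/p)$, which is absorbed into the error. Using $\lambda_{\beta_1,\beta_2}(m)^2=1$ and $\overline{a(m)}=a(m)$ this gives
\[
S = r\sum_{p<T} \frac{\lambda_{\beta_1,\beta_2}(p)}{p}\Big(\sin\big(2\pi\beta\tfrac{\log p}{\log T}\big) - \sin\big(2\pi\alpha\tfrac{\log p}{\log T}\big)\Big)\sum_{m < T/p} \frac{d_r^2(m)}{m}\big(1 - \tfrac{\log m}{\log T}\big)^\eta\big(1 - \tfrac{\log m}{\log T} - \tfrac{\log p}{\log T}\big)^\eta + (\mathrm{error}).
\]

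Next I would convert both sums to integrals. For the inner sum I would use the estimate $\sum_{m<x} d_r^2(m)/m = A_r(\log x)^{r^2} + O((\log T)^{r^2-1})$ quoted from~\cite{BMN}, together with partial summation against the smooth weight, to replace the inner sum by $A_r\int_1^{T/p}\frac{r^2(\log y)^{r^2-1}}{y}(1-\log y/\log T)^\eta(1-\log y/\log T-\log p/\log T)^\eta\,dy$; for the outer sum I would apply the prime number theorem, cleanest after temporarily restoring a $\log p$ in the numerator (as in \eqref{eq2}) and then removing it. The substitutions $u=\log x/\log T$, $v=\log y/\log T$ then produce a factor $\log T$ from the $p$-sum and a factor $(\log T)^{r^2}$ from the inner integral; dividing by $\sum_{m<T}|a(m)|^2/m = r^2 A_r(\log T)^{r^2}\int_0^1 v^{r^2-1}(1-v)^{2\eta}\,dv$ cancels the powers of $\log T$ and the constants $A_r$, leaving exactly the claimed expression with the accumulated errors sitting inside the $O(1/\log T)$.

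The main obstacle is the uniform bookkeeping of the error terms rather than anything structural: one must check that the $m$ with $p\mid m$ and the proper prime powers are genuinely negligible, that the tail of the divisor sum near $m\approx T/p$ (where the smooth factor $(1-\log(mp)/\log T)^\eta$ is already small) does not spoil the $O((\log T)^{r^2-1})$ remainder, and that truncating $A$ at $T^{1-\epsilon}$ instead of $T$ alters the limiting integrals only by $O(1/\log T)$. These are precisely the points where the length restriction and the mollifying weight $(1-\log n/\log T)^\eta$ do their job; everything else is the specialization of the Theorem~\ref{Main-Th} computation to the phase $\sin(2\pi\beta u)-\sin(2\pi\alpha u)$ in place of $\cos(2\pi d u)(1-u/\alpha)$.
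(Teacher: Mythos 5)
Your proposal is correct and follows exactly the route the paper itself indicates: the paper's entire "proof" of Lemma~\ref{lemm-meas} is the remark that one applies Lemma~\ref{Measure} and then proceeds as in the proof of Theorem~\ref{Main-Th}, which is precisely your substitution of the explicit coefficients, the factorization $a(mp)\overline{a(m)}=r\lambda_{\beta_1,\beta_2}(p)d_r^2(m)(\cdots)^\eta(\cdots)^\eta$, the divisor-sum estimate from~\cite{BMN} with the prime number theorem, and the change of variables $u=\log x/\log T$, $v=\log y/\log T$. Your write-up is in fact more careful than the paper's about the negligible contributions ($p\mid m$, prime powers, the truncation at $T^{1-\epsilon}$), but there is no substantive difference in approach.
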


 To proceed, we will use the following notations:
\begin{itemize}

\item By $\sigma$-gap we mean a normalized gap of length equal to $\sigma$.
\item By $  \mu_{\alpha_2,\alpha_1 ,\sigma , \beta_1,\hspace{1 mm} x \in I } $ we mean the measure of sub-interval $I$ inside a $\sigma$-gap that is followed by, respectively, an $\alpha_1$ and an $\alpha_2$-gaps. The next gap after $\sigma$ equals $\beta_1.$ If we put a dot in any of these places  we mean that we have no restriction on that gap or interval. \\

%For example \\ \item we let $\displaystyle{\mu_{\cdot,\cdot, \sigma, \cdot, \cdot }}$ to denote the measure of gaps of length $\sigma$. \item Respectively $\mu_{\cdot,0.5 ,\sigma , \cdot, \cdot }$ denote the measure of gaps with length $\sigma$ that comes after a $0.5$-gap. \item By $\mu_{\cdot,0.5 ,1 , \cdot, x<0.5 }$ we mean the measure of the first half of $1$-gaps that comes after a $0.5$-gap.
\end{itemize}

\subsection{Distribution of $\mu_A$ Under AH }
We continue with providing a manual as how we can get information about the distribution of $\mu_A$ under the assumption of AH.
\begin{enumerate}
\item \textbf{Measure of region around each zeros.} Using Lemma \ref{Measure} we can find a precise estimate for 
\begin{equation}
\mu_{A, \pm c} :=\mu_A\big(\cup \hspace{1 mm} [\tilde{\gamma}-c, \tilde{\gamma}+c]\big)
\end{equation}
We define $\mu_{A, \text{Out}}:= 1- \mu_{A, \pm 0.25}.$
\item \textbf{Measure of $0.5$-gaps.} To estimate the measure of $0.5$-gaps we first need to calculate 
\begin{equation}
\label{1st-shft}
\mu_A\big(\cup \hspace{1 mm} \tilde{\gamma} \pm [0.25,  0.5]\big).
\end{equation} This will cover all $0.5$-gaps plus some middle parts of gaps $\geq 1,$ which is smaller than $\mu_{A, \text{Out}}$. Therefore we get that
$$\mu_A\big(\cup \hspace{1 mm} \tilde{\gamma} \pm [0.25,  0.5]\big) - \mu_{A, \text{Out}} \leq \mu_A(0.5-\text{gaps}) \leq  \mu_A\big(\cup \hspace{1 mm} \tilde{\gamma} \pm [0.25,  0.5]\big).$$ This would yield a good estimates if $\mu_{A, \text{Out}}$ is small.
\item \textbf{Measure of region that the test function attain high values.} 
The region where $C_{1, 1}(t)$ is large  often come after (or before) a $0.5$-gap. For example two or three consecutive gaps or length $0.5.$ Consider the following events $E_1: \cup \hspace{1 mm}\tilde{\gamma}+ (0, 0.5)$ and $E_2: \cup \hspace{1 mm}\tilde{\gamma}+ (0.5, 1).$ We have that $E_1 \cup E_2$ contains all the intervals except parts of gaps $\geq 1.5$. On the other hand we have $E_1 \cap E_2$ contains intervals of length $0.5$ that comes after a $0.5$-gap. By using $\mu(E \cup F)=\mu(E)+\mu(F)- \mu(E \cap F)$ we can get an estimate of measure $0.5$-intervals that comes after a $0.5$-gap.  
\item \textbf{Measure of isolated $0.5$-gaps.} Let $E_1: \cup \hspace{1 mm}\tilde{\gamma}+ (0.5, 1)$ and \break $E_2: \cup \hspace{1 mm}\tilde{\gamma}- (0.5, 1).$ We have that $E_1 \cup E_2$ contains all of the \break $0.5$-gaps except the ones that comes before and after $\geq 1$-gaps. On the other hand we have $E_1 \cap E_2$ contains $0.5$-gaps that comes after and before $0.5$-gaps. Again by using $\mu(E \cup F)=\mu(E)+\mu(F)- \mu(E \cap F)$ we can get an estimate on the measure of isolated $0.5$-gaps. More precisely using the notation above, we get 
\begin{align*}
\mu_{\cdot,0.5 ,0.5 , 0.5, \cdot }& + \mu_{\cdot,0.5 ,1 , \cdot, x<0.5 }+ \mu_{\cdot,\cdot ,1 ,0.5, x>0.5}+ \mu_{\cdot,\cdot ,1.5 , \cdot, 0.5<x<1 }=\\& \notag |E_1|+|E_2|-1  +2\mu_{\cdot,\geq 1 ,\geq 1.5 , \cdot, x<0.5 } + \mu_{\cdot,\geq 1 ,0.5 , \geq 1, \cdot }
\end{align*}
\end{enumerate}

\begin{Rem}
\label{Rem-Liou}
We will quickly summarize what we get if we apply the above to the basic choice $a(n)=\lambda(n)$. Roughly speaking, we get that the measure of the $0.5$-gaps is about $0.77$ and the measure of $\geq 1$-gaps about $0.23.$
Around $40\%$ of the measure is distributed in $0.5$-gaps that are followed by another $0.5$-gap. There is a direct correlation between the measure of the $0.5$-gaps with next and previous gaps equal to $0.5,$ and  the $0.5$-gaps such that next and previous gaps are $\geq 1.$ % Also there is a direct connection with the measure of gaps that comes after two consecutive  $0.5$-gaps and gaps that come after $1.5$-gaps.
 \end{Rem}
 \section{Proof of Lemma \ref{Measure} and of Theorem \ref{Upp}}
 We begin with giving a proof of the fact that under AH, $C_{1, 1}$ is always smaller than $1$: \\
 
 \begin{lemma} Assume the alternative hypothesis, we have 
 \begin{equation}
     C_{1, 1}(t) \leq 1. 
 \end{equation}
Moreover, 
  \begin{equation}
  \label{8.2}
     C_{1, 2}(t) \leq 0.5. 
 \end{equation}
 \end{lemma}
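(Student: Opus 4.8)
The plan is to recast $C_{1,\alpha}(t)$ in normalized coordinates, so that each summand becomes $\hat C_1$ evaluated at a normalized distance to a zero, and then to exploit that, under AH, all nearby normalized zeros lie in a single translate of $\tfrac12\Z$, together with the partition-of-unity identity $\sum_{n}\hat C_1(x-n)=1$.

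First I would set $v_\gamma:=(\gamma-t)\tfrac{\log T}{2\pi}$, the normalized displacement of a zero $\gamma$ from $t$. Then $\tfrac{\alpha}{2}(\gamma-t)\log T=\pi\alpha v_\gamma$, so the summand in \eqref{C_{1, alpha}} is exactly
$$\Bigl(\frac{\sin(\tfrac{\alpha}{2}(\gamma-t)\log T)}{\tfrac{\alpha}{2}(\gamma-t)\log T}\Bigr)^{2}=\Bigl(\frac{\sin(\pi\alpha v_\gamma)}{\pi\alpha v_\gamma}\Bigr)^{2}=\hat C_1(\alpha v_\gamma),$$
whence $C_{1,\alpha}(t)=-\alpha^{-1}+\sum_\gamma\hat C_1(\alpha v_\gamma)$. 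Since $\hat C_1\ge 0$ and $\hat C_1(\alpha v)\ll(\alpha v)^{-2}$, the zeros with $|v_\gamma|$ large contribute only $O(1/\log T)$ by the standard zero-counting estimate, so only the finitely many zeros within a bounded normalized distance of $t$ matter; for $t$ near height $T$ one has $\tilde\gamma-\tilde t=v_\gamma(1+o(1))$ as $T\to\infty$, so these $v_\gamma$ are, in the limit, precisely the normalized neighbor spacings governed by AH.

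Next I would invoke AH: every consecutive normalized gap lies in $\tfrac12\Z$, hence $\tilde\gamma-\tilde\gamma'\in\tfrac12\Z$ for all zeros near $t$. Writing $\tilde t=\tilde\gamma_0+\theta$, where $\gamma_0$ is the zero just below $t$, and reducing $\theta$ modulo $\tfrac12$, every relevant $v_\gamma$ lies in the single coset $\tfrac12\Z-\theta$. Because $\hat C_1\ge 0$, enlarging the index set to the full coset only adds nonnegative terms, so
$$\sum_\gamma\hat C_1(\alpha v_\gamma)\le\sum_{n\in\Z}\hat C_1\bigl(\alpha(\tfrac n2-\theta)\bigr).$$
Now I would use $\sum_{n\in\Z}\hat C_1(x-n)=\sum_{n\in\Z}\bigl(\tfrac{\sin\pi(x-n)}{\pi(x-n)}\bigr)^{2}=1$ for every real $x$, which is Poisson summation together with the fact that $\widehat{\hat C_1}$ is the triangle function $C_1(|u|)\mathds{1}_{|u|\le 1}$, vanishing at all nonzero integers. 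For $\alpha=1$, splitting the sum over $n$ into even and odd residues gives $\sum_m\hat C_1(m-\theta)+\sum_m\hat C_1(m+\tfrac12-\theta)=1+1=2$, so $C_{1,1}(t)\le-1+2=1$. For $\alpha=2$ we have $\alpha(\tfrac n2-\theta)=n-2\theta$, so the bound is $\sum_{n\in\Z}\hat C_1(n-2\theta)=1$, giving $C_{1,2}(t)\le-\tfrac12+1=\tfrac12$.

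The substance here is light; I expect the only delicate points to be bookkeeping. One is matching the $\log T$-normalization appearing in the definition of $C_{1,\alpha}$ with the genuine normalization $\tilde\gamma$ used in AH — these differ by a factor $1+o(1)$ as $T\to\infty$, which is harmless since AH is asymptotic and the tail of the sum is $O(1/\log T)$ anyway. The other is the (implicit) simplicity hypothesis: a zero of multiplicity $m$ sitting at $t$ already forces $\sum_\gamma\hat C_1(\alpha v_\gamma)\ge m$, so one should assume, as noted after \eqref{defC}, that the zeros are simple, in which case each point of the coset $\tfrac12\Z-\theta$ is hit at most once and the displayed bounds are exactly what is needed. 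The same Poisson-summation computation in fact yields $C_{1,\alpha}(t)\le\alpha^{-1}$ for all $1\le\alpha\le 2$, with strict inequality whenever the nearby zeros fail to fill the entire coset.
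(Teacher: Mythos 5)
Your proof is correct and follows essentially the same route as the paper: under AH the nearby normalized zeros sit in a subset of a coset of $\tfrac12\Z$, so by nonnegativity of $\hat C_1$ the sum is maximized by the full half-integer lattice (an infinite string of $0.5$-gaps), and Poisson summation evaluates that lattice sum as $2$ for $\alpha=1$ (resp.\ $1$ for $\alpha=2$), giving the bounds $1$ and $\tfrac12$. Your additional remarks on the $\log T$ versus $\tilde\gamma$ normalization, the tail estimate, and the need for simplicity of the zeros are sound bookkeeping that the paper leaves implicit (the simplicity assumption appears only in an earlier footnote).
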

 %that there is no normalized gap of length smaller than $1/2,$ then we have that 
 
 \begin{proof}
 Recall that, 
 \begin{equation*}
C_{1, \alpha}(t)= -\alpha^{-1} +\sum_{\gamma} \bigg(\frac{\sin(\tfrac{\alpha}{2}(\gamma-t)\log T)}{\tfrac{\alpha}{2}(\gamma-t)\log T}\bigg)^2.
\end{equation*}
If we assume AH then the maximum of $C_{1, \alpha}$ happens if we have an infinite sequence of $0.5$-gaps. Therefore
 \begin{equation*}
C_{1, 1}(t) \leq -1 + \sum_{k \in \mathbb{Z}} \bigg(\frac{\sin(k\tfrac{\pi}{2}+ d)}{k\tfrac{\pi}{2}+ d}\bigg)^2.
\end{equation*}
 The Fourier transform of $\big(\sin(\pi v/2)/(\pi v/2) \big)^2$ is $2(1- 2|\xi|)$ for $|\xi|< 1/2$ and it is $0$ elsewhere. Therefore using the Poisson summation formula, 
 \begin{equation*}
     \sum_{k \in \mathbb{Z}} \bigg(\frac{\sin(k\tfrac{\pi}{2}+ d)}{k\tfrac{\pi}{2}+ d}\bigg)^2= \sum_{m \in \mathbb{Z}} 2(1-2|m|) \chi_{|m|<1/2}e^{2\pi i d m}=2.
 \end{equation*}
 Therefore, $C_{1, 1}(t) \leq 1.$ The proof of \eqref{8.2} follows similarly.
 \end{proof}

Now we give proofs of Lemma \ref{Measure} and of Theorem \ref{Upp}. After applying Lemma \ref{Measure} , the proof of Lemma \ref{lemm-meas} goes similarly to proof of Theorem \ref{Main-Th}.
\begin{proof}[Proof of Lemma \ref{Measure}]
We need to calculate the following
\begin{equation}
\label{eq-meas}
\displaystyle{\frac{\sum_{\gamma} \omega(\tfrac{1}{2}+i\gamma) \bigintss_{2\alpha \pi \log^{-1}T}^{2\beta \pi \log^{-1}T} \Big|A(\tfrac{1}{2}+i\gamma+ix)\Big|^2 dx}{\int \omega(\tfrac{1}{2}+ix) \big|A(\tfrac{1}{2}+ix)\big|^2 dx}},
\end{equation}
\\
for $A(s)=\sum a(n)n^{-s}\mathds{1}_{[n \leq T\log^{-2} T]}$, we expand the square in the numerator and \eqref{eq-meas} comes down to calculating
\begin{equation*}
\sum_{\gamma} \omega(\tfrac{1}{2}+i\gamma) \sum_{m, n} \frac{a(m)\overline{a(n)}}{n} \Big(\frac{n}{m}\Big)^{\tfrac{1}{2}+i\gamma} \bigintsss_{2\alpha \pi \log^{-1}T}^{2\beta \pi \log^{-1}T} \Big(\frac{n}{m}\Big)^{ix}.
\end{equation*}
For the $m=n$ we get 
$$\sum_{\gamma} \omega(\tfrac{1}{2}+i\gamma) \sum_{m} \frac{|a(m)|^2}{m} \big(\frac{2\pi\beta}{\log T}-\frac{2\pi\alpha}{\log T}\big).$$ Now it is easy to verify that $$\sum_{\gamma} \omega(\tfrac{1}{2}+i\gamma)= \frac{\log T}{2\pi}+O(\frac{1}{T}),$$ therefore we obtain
$(\beta-\alpha)\sum |a(m)|^2 m^{-1}$, for the diagonal terms. For $m \neq n $ we apply the smooth version of the Landau-Gonek formula, \cite{Me}[Lemma 2.1], and we get
$$\frac{1}{\pi} \sum_{m, p} \frac{a(mp)\overline{a(m)}}{mp}\bigg(\sin\Big(2\pi\beta \frac{\log p}{\log T}\Big)- \sin\Big(2\pi\alpha \frac{\log p}{\log T}\Big)\bigg),$$
plus a negligible error term. This completes the proof of the lemma.
\end{proof}
\begin{proof}[Proof of Theorem \ref{Upp}] We begin by writing the numerator of $ \mathds{E}_{C_{1, 1}, A}$, in \eqref{eq2} as 
\begin{equation}
\label{Cauchy1}
\frac{2}{\log T} \sum_{mp<T} \bigg(\frac{a(m)\sqrt{\log p}}{\sqrt{mp}}\big(1- \frac{\log p}{\log T}\big)\bigg)\bigg(\frac{a(mp)\sqrt{\log p}}{\sqrt{mp}}\bigg).
\end{equation}
By Cauchy–Schwarz inequality we have \eqref{Cauchy1} is smaller than 
\begin{align}
\label{Cauchy2}
\notag \frac{2}{\log T}  \bigg(\sum_{mp<T} & \frac{|a(m)|^2 \log p}{mp}\big(1- \frac{\log p}{\log T}\big)^2 \bigg)^{\tfrac{1}{2}}  \bigg(\sum_{mp<T} \frac{|a(mp)|^2 \log p}{mp}\bigg)^{\tfrac{1}{2}}\\ \notag
\leq \frac{2}{\log T} & \bigg(\sum_{m<T} \frac{|a(m)|^2 }{m} \sum_{p<T/m}\frac{\log p}{p}\big(1- \frac{\log p}{\log T}\big)^2 \bigg)^{\tfrac{1}{2}}  \bigg(\sum_{m<T} \frac{|a(m)|^2 }{m} \log m\bigg)^{\tfrac{1}{2}}\\
\sim \frac{2}{\sqrt{3}} & \bigg(\sum_{m<T} \frac{|a(m)|^2 }{m} \Big(1-\big( \frac{\log m}{\log T}\big)^3\Big) \bigg)^{\tfrac{1}{2}}  \bigg(\sum_{m<T} \frac{|a(m)|^2 }{m} \big(\frac{\log m}{\log T}\big) \bigg)^{\tfrac{1}{2}}.
\end{align}
To continue for $0 \leq i \leq k-1,$ we set 
\begin{equation}
\frac{\displaystyle{ \sum_{T^{i/k}< m< T^{(i+1)/k}} {|a(m)|^2m^{-1}}}}{\displaystyle{\sum_{m<T}|a(m)|^2m^{-1}}}= A_i
\end{equation}
Using this partition, we have that \eqref{Cauchy2} is smaller than 
\begin{equation*}
\frac{2}{\sqrt{3}} \sum_{i=1}^{k} \bigg(A_i\Big(1- \big(\frac{i}{k}\big)^3 \Big) \bigg)^{\tfrac{1}{2}} \bigg(A_i\frac{i+1}{k} \bigg)^{\tfrac{1}{2}} = \frac{2}{\sqrt{3}} \sum_{i=1}^{k} A_i \sqrt{\Big(1- \big(\frac{i}{k}\big)^3 \Big)\frac{i+1}{k}}.
\end{equation*}
To finish the proof we use the fact that $\sum A_i =1$ and note that $$\max \frac{2}{\sqrt{3}} \sqrt{(1-x^3)x}= 0.79370\cdots.$$ 
For $$\int C_{2, 1} d\mu_A,$$
the method is similar. First change appears in \eqref{Cauchy2} with the following sum 
\begin{equation}
\sum_{p<T/m}\frac{\log p}{p}\bigg(1- \frac{\log p}{\log T} + \frac{\sin\big(2\pi\tfrac{\log p}{\log T}\big)}{2\pi}\bigg)^2.
\end{equation}
Therefore if we define \begin{equation}
F(x):= \int_{0}^{1-x} \Big(1-u  + \frac{\sin(2\pi u)}{2\pi}\Big)^2du,
\end{equation}
our problem comes down to find a maximum of the following $$2\sqrt{xF(x)}.$$
We solved this using Python and the max is $0.90156\cdots$ obtained for $x=0.619352.$
\end{proof}
\subsection*{Acknowledgements}
I would like to thank Simon Myerson for careful reading of the paper and his suggestions.

\end{document}